\newtheorem{theorem}{Theorem}[section]
\newtheorem{proposition}[theorem]{Proposition}
\newtheorem{lemma}[theorem]{Lemma}
\newtheorem{corollary}[theorem]{Corollary}
\newtheorem{definition}[theorem]{Definition}
\newtheorem{remark}[theorem]{Remark}
\numberwithin{equation}{section}
\begin{document}

\baselineskip=15pt

\title[Principal bundles over a real algebraic curve]{Principal
bundles over a real algebraic curve}

\author[I. Biswas]{Indranil Biswas}

\address{School of Mathematics, Tata Institute of Fundamental
Research, Homi Bhabha Road, Bombay 400005, India}

\email{indranil@math.tifr.res.in}

\author[J. Hurtubise]{Jacques Hurtubise}

\address{Department of Mathematics, McGill University, Burnside
Hall, 805 Sherbrooke St. W., Montreal, Que. H3A 2K6, Canada}

\email{jacques.hurtubise@mcgill.ca}

\subjclass[2000]{53C07, 14H60, 14P99}

\keywords{Anti-holomorphic involution, principal bundle,
stable bundle, Einstein--Hermitian connection}

\date{}

\begin{abstract}
Let $X$ be a compact connected Riemann surface equipped with
an anti-holomorphic involution $\sigma$. Let $G$ be a
connected complex reductive affine algebraic group, and let
$\sigma_G$ be a real form of $G$.
We consider holomorphic principal $G$--bundles on $X$
satisfying compatibility conditions with respect to
$\sigma$ and $\sigma_G$. We prove that the points defined over
$\mathbb R$ of the smooth locus of a moduli space of principal 
$G$--bundles on $X$ are precisely these objects, under the
assumption that ${\rm genus}(X)\, \geq\, 3$. Stable,
semistable and polystable
bundles are defined in this context. Relationship between
any of these properties and the corresponding property of
the underlying holomorphic principal $G$--bundle is
explored. A bijective correspondence between unitary
representations and polystable objects is established.
\end{abstract}

\maketitle

\section{Introduction}

Let $X$ be a compact connected Riemann surface, and let
$$\sigma\, :\, X\, \longrightarrow\, X$$ be an anti-holomorphic
involution. Let $G$ be a connected complex reductive linear algebraic
group equipped with an anti-holomorphic involutive automorphism
$\sigma_G$. The subgroup of the center of $G$ that is fixed by
$\sigma$ will be denoted by $Z_{\mathbb R}$.

For a holomorphic principal $G$--bundle $E_G$ on $X$, let
$\overline{E}_G$ be the corresponding $C^\infty$ principal
$G$--bundle obtained by twisting the action of $G$
on $E_G$ using $\sigma_G$. The pullback $\sigma^*
\overline{E}_G$ has a natural holomorphic structure.

A pseudo-real principal $G$--bundle on $X$ is defined to be a pair
of the form $(E_G\, ,\rho)$, where $E_G$ is a holomorphic
principal $G$--bundle on $X$, and
$$
\rho\, :\, E_G\, \longrightarrow\, \sigma^*\overline{E}_G
$$
is a holomorphic isomorphism of principal $G$--bundles satisfying
the condition that there is an element $c\, \in\, Z_{\mathbb R}$
such that the composition
$$
E_G\, \stackrel{\rho}{\longrightarrow}\, \sigma^*\overline{E}_G
\, \stackrel{\sigma^*\overline{\rho}}{\longrightarrow}\,
\sigma^*\overline{\sigma^*\overline{E}}_G\,=\,
\sigma^*\sigma^*\overline{\overline{E}}_G\, =\, E_G
$$
coincides with the automorphism of $E_G$ given by $c$. (The details
are in Section \ref{sec2.1}.) If $c\, =\, e$,
then $(E_G\, ,\rho)$ is called a real principal $G$--bundle.

We define semistable, stable and polystable
pseudo-real principal $G$--bundles. The following is proved (see 
Proposition \ref{prop1} and Proposition \ref{prop2}):

\begin{proposition}\label{prop0}
A pseudo-real principal $G$--bundle $(E_G\, ,\rho)$ is
semistable (respectively, polystable) if and only if $E_G$ is
semistable (respectively, polystable).
\end{proposition}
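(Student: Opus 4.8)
The plan is to prove both equivalences by exploiting the canonicity of the Harder--Narasimhan reduction (for the semistable assertion) and of the socle-type reduction to a Levi subgroup (for the polystable assertion). Recall that semistability of a principal $G$--bundle is tested on reductions to parabolic subgroups: $E_G$ is semistable precisely when for every parabolic $P\subset G$, every reduction $E_P\subset E_G$ and every dominant character $\chi$ of $P$, the associated line bundle has nonpositive degree. Semistability of the pseudo-real object $(E_G,\rho)$ imposes this same family of inequalities, but only over those reductions $E_P$ that are compatible with $\rho$ (those carried to $\sigma^*\overline{E}_P$ by $\rho$). The inequalities defining semistability of $(E_G,\rho)$ therefore form a subfamily of those defining semistability of $E_G$, so the implication ``$E_G$ semistable $\Rightarrow$ $(E_G,\rho)$ semistable'' is immediate. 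The whole content of the proposition lies in the reverse implications.

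For the semistable case I would argue by contradiction. Assume $(E_G,\rho)$ is semistable but $E_G$ is not. Then $E_G$ carries a unique Harder--Narasimhan reduction $E_P\subset E_G$ to a parabolic $P$, canonically attached to $E_G$. The pseudo-real structure produces from $E_P$ a new reduction of $E_G$: twisting by the anti-holomorphic automorphism $\sigma_G$ sends $P$ to the parabolic $\sigma_G(P)$ and $E_P$ to a reduction $\overline{E}_P$ of $\overline{E}_G$; pulling back by $\sigma$ gives a reduction $\sigma^*\overline{E}_P$ of $\sigma^*\overline{E}_G$; and finally $\rho^{-1}$ transports this to a reduction $\rho^{-1}(\sigma^*\overline{E}_P)$ of $E_G$ to $\sigma_G(P)$. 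The essential numerical point is that the composite operation $E_G\mapsto\sigma^*\overline{E}_G$ preserves the degree of every associated line bundle, the orientation reversal of the anti-holomorphic map $\sigma$ cancelling the sign introduced by conjugating with $\sigma_G$. Hence the transported reduction realizes the same Harder--Narasimhan type as $E_P$, and by uniqueness of the Harder--Narasimhan reduction it must coincide with $E_P$ (in particular $\sigma_G(P)=P$). Thus $E_P$ is $\rho$--compatible, and being destabilizing it contradicts the semistability of $(E_G,\rho)$.

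The polystable case follows the same template, with the Harder--Narasimhan reduction replaced by the canonical reduction of a polystable bundle to a Levi subgroup. Granting the semistable equivalence just established, in either implication we may assume $E_G$ is semistable. A semistable $E_G$ is polystable exactly when it admits a reduction $E_L$ to a Levi subgroup $L$ with $E_L$ stable, and the associated socle (canonical Jordan--H\"older) reduction is canonically attached to $E_G$. Transporting $E_L$ through the same chain of operations $\sigma_G$, $\sigma^*$, $\rho^{-1}$ and again invoking degree preservation shows that its image realizes the same polystable reduction type, so canonicity forces $\rho$--invariance of $E_L$. Here it matters that the element $c\in Z_{\mathbb R}$ occurring in the pseudo-real condition is central, hence acts trivially on $E_G/L$, so that $\rho$ induces a well-defined involution on the relevant space of reductions and no obstruction arises from the relation $\sigma^*\overline{\rho}\circ\rho=c$. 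The resulting $\rho$--invariant Levi reduction with stable reduced bundle is precisely the datum making $(E_G,\rho)$ polystable, and conversely such a datum forgets to a polystable structure on $E_G$; this yields the equivalence in both directions.

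The main obstacle I anticipate is the careful bookkeeping of degrees under the composite of the two anti-holomorphic operations, together with the verification that the transported canonical reduction is \emph{literally} the canonical reduction of $E_G$, and not merely that of an isomorphic bundle, so that uniqueness may be invoked to conclude $\rho$--invariance rather than mere isomorphism. Making precise the statement ``$\rho$ preserves $E_P$'' also requires tracking the parabolic $\sigma_G(P)$ against $P$ and checking that the identification coming from $\rho$ respects the reduction; this is exactly the place where the central ambiguity $c$ must be shown to be harmless.
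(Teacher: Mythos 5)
There is a genuine gap, and it sits exactly where you flagged your ``main obstacle.'' You have interpreted semistability of $(E_G\,,\rho)$ as a condition on reductions $E_P\,\subset\, E_G$ that are literally carried to themselves by $\rho$ (equivalently, reductions to parabolic subgroups $Q$ with $\sigma_G(Q)\,=\,Q$). That is Ramanathan's definition --- what the paper calls r-semistability --- whereas the paper's Definition \ref{def3} (Behrend's) is a condition on parabolic subgroup-schemes $P\,\subset\,\text{Ad}(E_G)$ with $\widetilde{\rho}(P)\,\subset\, P$. Over $\mathbb R$ these are \emph{not} equivalent, and under your reading the proposition is false: Section \ref{sec-2.3} of the paper constructs a real $\text{GL}(2,{\mathbb C})$--bundle (the frame bundle of $V\,=\,L\oplus(\sigma^*\overline{L})^*$ with $\deg L\,>\,0$, for the involution $A\,\longmapsto\,(\overline{A}^t)^{-1}$) which is r-stable --- vacuously, since no proper parabolic of $\text{GL}(2,{\mathbb C})$ is $\sigma_G$--invariant --- yet whose underlying bundle $E_G$ is unstable. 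The precise step of your argument that fails on this example is the appeal to uniqueness of the Harder--Narasimhan reduction to conclude that the transported reduction ``must coincide with $E_P$ (in particular $\sigma_G(P)\,=\,P$).'' The canonical reduction is unique only up to the ambiguity $(Q\,,E_Q)\,\longmapsto\,(g^{-1}Qg\,,E_Qg)$, so uniqueness identifies the transported reduction with $E_P$ only at the level of the associated parabolic subgroup-scheme of $\text{Ad}(E_G)$, not as a subset of $E_G$. In the example, the Harder--Narasimhan subgroup-scheme (the stabilizer of $L$) is indeed $\widetilde{\rho}$--invariant, but no Harder--Narasimhan \emph{reduction} is $\rho$--invariant, since $\sigma_G(Q)\,\neq\,Q$ for every proper parabolic $Q$.

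With the paper's definition, the weaker subgroup-scheme-level conclusion that uniqueness actually delivers is exactly what is needed, and this is the paper's proof of the semistable case: the Harder--Narasimhan filtration of the adjoint vector bundle $\text{ad}(E_G)$ is canonical, hence $\rho_{\rm ad}$ takes it to the Harder--Narasimhan filtration of $\text{ad}(\sigma^*\overline{E}_G)$, so the associated parabolic subgroup-scheme $P$ satisfies $\widetilde{\rho}(P)\,\subset\, P$ while $\deg{\mathfrak p}\,>\,0$. So your strategy for the semistable half can be repaired by systematically replacing ``$\rho$--invariant reduction'' with ``$\widetilde{\rho}$--invariant subgroup-scheme.'' For the polystable half, more is missing beyond this same confusion: Definition \ref{def4} requires a $\widetilde{\rho}$--invariant \emph{Levi subgroup-scheme} $L(P)\,\subset\, P$, which need not exist in general and is not produced by transporting a Levi reduction and invoking canonicity of the socle. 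The paper's converse direction ($E_G$ polystable $\Rightarrow$ $(E_G\,,\rho)$ polystable) constructs $L(P)$ analytically: it takes an Einstein--Hermitian connection on $E_G$ associated to a $\sigma_G$--invariant maximal compact $K$ and a $\sigma$--anti-invariant K\"ahler form, and realizes $L({\mathfrak p})$ as the orthogonal complement of the nilpotent radical $R_n({\mathfrak p})$ inside ${\mathfrak p}$, checking it is parallel; the forward direction likewise needs the isotypical decomposition of $\mathfrak g$ under $Z(L(Q))$ together with the Ramanan--Ramanathan theorem on associated bundles. None of this is replaceable by the uniqueness argument you sketch.
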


A comment on the definition of (semi)stability is in order.
Ramanathan and Behrend defined (semi)stable principal bundles
\cite{Ra}, \cite{Be}.
It seems to be a common belief that their definitions are
equivalent. When the base field is algebraically closed, it is
easy to see that the two definitions are equivalent. But here
we are working over the base field $\mathbb R$. In turns out that
in this case their definitions differ. The definition of
Behrend works better. (See Section \ref{sec-2.3} for the details.)

Fix a maximal compact subgroup $K\, \subset\, G$ such that
$\sigma_G(K)\,=\, K$ (we show that such a subgroup exists). Let
$$
\widehat{K} \,:=\, K\rtimes ({\mathbb Z}/2{\mathbb Z})
$$
be the semi-direct product given by the involution $\sigma_G$. Fix
a point $x\, \in\, X$ such that $\sigma(x)\, \not=\, x$. Let
$\Gamma$ be the space of all homotopy classes of paths on $X$
starting from $x$ and ending in either $x$ or $\sigma(x)$.
This set $\Gamma$ has a natural structure of a group.
Let ${\rm Hom}'(\Gamma\, , \widehat{K})$ be the space of
all homomorphisms $\varphi\, :\, \Gamma\, \longrightarrow\,
\widehat{K}$ that fit in the commutative diagram
$$
\begin{matrix}
0 & \longrightarrow & \pi_1(X,x)& \longrightarrow & \Gamma &
\longrightarrow & {\mathbb Z}/2{\mathbb Z} &
\longrightarrow & 0\\
&& \Big\downarrow && ~\, \Big\downarrow \varphi && \Vert \\
0 & \longrightarrow & K & \longrightarrow & \widehat{K} &
\longrightarrow & {\mathbb Z}/2{\mathbb Z} &
\longrightarrow & 0
\end{matrix}
$$

We prove the following theorem (see Theorem \ref{thm1}):

\begin{theorem}\label{thm0}
Let $G$ be semisimple.
Isomorphism classes of polystable real principal $G$--bundles
on $X$ are in bijective
correspondence with ${\rm Hom}'(\Gamma\, , \widehat{K})/K$.
\end{theorem}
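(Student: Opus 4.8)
The plan is to prove this as a \emph{real} analogue of the Ramanathan--Narasimhan--Seshadri correspondence, the essential input being the uniqueness of the Einstein--Hermitian (Yang--Mills) connection on a polystable bundle. By Proposition \ref{prop0}, a real principal $G$--bundle $(E_G,\rho)$ is polystable exactly when $E_G$ is polystable; since $G$ is semisimple, the classical theory then endows $E_G$ with a \emph{unique} Einstein--Hermitian connection, which in the relevant topological type is flat and hence determines a monodromy homomorphism $\theta\colon \pi_1(X,x)\to K$, well defined once one fixes a point $p$ in the fibre $(E_G)_x$ lying in the $K$--reduction. I would first set up this part carefully, recording that $\theta$ is canonical up to conjugation by the stabiliser $K$ of $p$, so that $E_G\mapsto \theta$ is the restriction to $\pi_1(X,x)$ of the correspondence we want.

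The heart of the argument is to promote $\theta$ to a homomorphism $\varphi\colon \Gamma\to\widehat{K}$ using the real structure $\rho$. The key point is that $\rho\colon E_G\to\sigma^*\overline{E}_G$ is a holomorphic isomorphism of polystable bundles, and that $\sigma^*\overline{E}_G$ carries its \emph{own} unique Einstein--Hermitian connection obtained from that of $E_G$ by pulling back along $\sigma$ and twisting the $G$--action by $\sigma_G$; by uniqueness $\rho$ must intertwine the two canonical connections. For a path class $\delta\in\Gamma$ mapping to the nontrivial element of $\mathbb{Z}/2\mathbb{Z}$, that is, a path from $x$ to $\sigma(x)$, I would combine parallel transport of the flat connection along $\delta$ with the value of $\rho$ at the endpoint and compare the outcome with $p$. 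Because the twist by $\sigma_G$ intervenes, this comparison lands in the nontrivial coset $\widehat{K}\setminus K$, and one sets $\varphi(\delta)$ equal to this element. Verifying that $\varphi$ is multiplicative, that $\varphi|_{\pi_1(X,x)}=\theta$, and that $\varphi$ covers the identity on $\mathbb{Z}/2\mathbb{Z}$ then reduces to tracking parallel transport together with the naturality of $\rho$, giving $\varphi\in{\rm Hom}'(\Gamma,\widehat{K})$.

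The condition $c=e$ enters precisely here. For an odd class $\delta$ one has $\delta^2\in\pi_1(X,x)$, and after transport the composite $\sigma^*\overline{\rho}\circ\rho=c$ translates into the relation $\varphi(\delta)^2=\varphi(\delta^2)$ inside $\widehat{K}$; with $c=e$ this is exactly the multiplicativity needed to make $\varphi$ a genuine homomorphism into the \emph{semidirect product} $\widehat{K}=K\rtimes(\mathbb{Z}/2\mathbb{Z})$ rather than into a version twisted by $c$. (For pseudo-real bundles with $c\neq e$ one would obtain a $c$--twisted representation instead, which is why the statement is restricted to real bundles.) For the reverse direction I would begin with $\varphi\in{\rm Hom}'(\Gamma,\widehat{K})$, use $\theta=\varphi|_{\pi_1(X,x)}$ and the inverse Narasimhan--Seshadri construction to build the flat $K$--bundle and hence the polystable $G$--bundle $E_G$, and use the values of $\varphi$ on the odd classes to assemble the isomorphism $\rho$; the semidirect-product relation for $\varphi(\delta)^2$ then forces $\sigma^*\overline{\rho}\circ\rho=e$, so that $(E_G,\rho)$ is a \emph{real} bundle. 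Finally, replacing $p$ by $p\cdot k$ conjugates $\varphi$ by $k\in K$, which produces the quotient by $K$ and shows that the two constructions are mutually inverse on isomorphism classes.

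I expect the principal obstacle to be the compatibility bookkeeping between the two anti-holomorphic operations: the pullback $\sigma^*$ reverses paths through the involution on $X$, while the bar-construction twists the structure group by $\sigma_G$, and these must be reconciled so that parallel transport composed with $\rho$ genuinely yields a homomorphism on $\Gamma$. Establishing that $\rho$ preserves the canonical connection — via its uniqueness — is what makes this bookkeeping go through, and confirming the multiplicativity of $\varphi$ across the odd classes, where both operations act at once, is the delicate step.
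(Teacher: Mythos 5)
Your overall strategy is the same as the paper's: extract the monodromy of the flat (Einstein--Hermitian) connection on $\pi_1(X,x)$, extend it over the odd classes of $\Gamma$ by composing parallel transport with the real structure, and invert the construction by building $\rho$ from the values of $\varphi$ on odd classes via parallel transport. However, there is a genuine gap at the step you yourself identify as the key point. Uniqueness of the Einstein--Hermitian \emph{connection} does show that $\rho$ intertwines the two canonical connections, but this is not enough to make $\varphi(\delta)$ land in $K$: for that you need the $K$-\emph{reduction} $E_K\subset E_G$ containing your base point $p$ to satisfy $\rho(E_K)=E_K$. A flat connection does not determine its compatible $K$-reduction; the reductions inducing a fixed flat connection form the fixed-point set of the holonomy acting on $G/K$, and $\rho$ can move any given one by a nontrivial parallel automorphism. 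Concretely, take $E_G$ the trivial $\mathrm{SL}(2,{\mathbb C})$-bundle, $\sigma_G(A)=(\overline{A}^t)^{-1}$, $K=\mathrm{SU}(2)$, and the real structure given in the lift picture \eqref{sl} by $\widetilde{\sigma}(x,g)=(\sigma(x),h\,\sigma_G(g))$ with $h=\mathrm{diag}(t,1/t)$, $t>0$, $t\neq 1$. Then $\widetilde{\sigma}\circ\widetilde{\sigma}=\mathrm{Id}$, and $\widetilde{\sigma}$ preserves the trivial connection, which is the Einstein--Hermitian one; yet with the standard reduction $X\times \mathrm{SU}(2)$ and $p=(x,e)$, your recipe gives $\varphi(\delta)=h\notin K$, so $\varphi$ takes values in $G\rtimes({\mathbb Z}/2{\mathbb Z})$ rather than $\widehat{K}$. (This real bundle is isomorphic to the standard real trivial bundle via the constant automorphism $\mathrm{diag}(\sqrt{t},1/\sqrt{t})$ --- which is exactly the point: the reduction must be adjusted before the construction is run.)

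The paper fills precisely this hole by citing \cite[Proposition~3.7]{BGH}, which asserts that the Einstein--Hermitian reduction $E_K$ can be \emph{chosen} so that $\rho(E_K)=E_K$; this is a genuine additional statement (morally, a fixed point of the involution induced by $\rho$ on the nonpositively curved space of compatible reductions), and it does not follow formally from uniqueness of the connection. Once such an invariant reduction is fixed, the rest of your outline --- multiplicativity of $\varphi$, the role of $c=e$ (for $c\neq e$ one indeed only gets a twisted homomorphism), the inverse construction from $\varphi$, and the $K$-conjugation accounting for the choice of $p$ --- matches the paper's proof. (A side remark: for $G$ semisimple the Einstein--Hermitian connection is flat in \emph{every} topological type, since the center of $\mathfrak{k}$ vanishes, so no restriction on the type is needed at that point.) As written, though, the inference ``$\rho$ preserves the canonical connection, hence the comparison element lies in $K$'' is false, and this missing lemma must be stated and proved or cited.
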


Let us assume that the involutions $\sigma$ and $\sigma_G$ are such that
$\sigma^*\overline{E}_G$ and $E_G$ are of the same topological type. Let 
${\mathcal M}_X(G)$ be the moduli space of 
stable principal $G$--bundles on $X$ of the given topological type.
The smooth locus of ${\mathcal M}_X(G)$ will be denoted by
${\mathcal M}^s_X(G)$. We have an anti-holomorphic involution
$$
\eta\, :\, {\mathcal M}^s_X(G)\, \longrightarrow\,
{\mathcal M}^s_X(G)
$$
defined by $E_G\, \longmapsto\, \sigma^*\overline{E}_G$.

We prove the following (see Theorem \ref{thm2}):

\begin{theorem}\label{thm-1}
Assume that ${\rm genus}(X)\, \geq\, 3$. Take any
principal $G$--bundle $E_G\, \in\, {\mathcal M}^s_X(G)$. 
Then this $E_G$ is fixed by the involution $\eta$ if and
only if $E_G$ admits a pseudo-real structure.
\end{theorem}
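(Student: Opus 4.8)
The plan is to prove the two implications separately, with the reverse one being immediate and the forward one carrying the genuine content. If $E_G$ carries a pseudo-real structure, then by definition there is a holomorphic isomorphism $\rho\colon E_G \to \sigma^*\overline{E}_G$, so $E_G$ and $\sigma^*\overline{E}_G$ represent the same point of $\mathcal{M}^s_X(G)$; this says exactly that $\eta(E_G)=E_G$. No hypothesis on the genus is needed here.

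For the forward implication, suppose $E_G\in\mathcal{M}^s_X(G)$ is fixed by $\eta$. First I would convert this fixed-point condition into an honest isomorphism of bundles. Since $\mathcal{M}_X(G)$ is a coarse moduli space of stable $G$--bundles and $\eta(E_G)$ is the class of $\sigma^*\overline{E}_G$, the equality $\eta(E_G)=E_G$ produces a holomorphic isomorphism $\rho\colon E_G \to \sigma^*\overline{E}_G$. This is where the hypothesis $\mathrm{genus}(X)\geq 3$ is used: it guarantees that a point of the smooth locus is represented by a stable bundle whose automorphism group is exactly the center $Z(G)$, so that points of $\mathcal{M}^s_X(G)$ are genuinely in bijection with isomorphism classes and the automorphism computation below is available.

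Next I would form the composite $\tau := (\sigma^*\overline{\rho})\circ\rho$, a holomorphic automorphism of $E_G$. Since $E_G$ is stable, $\mathrm{Aut}(E_G)=Z(G)$, and so $\tau$ is the automorphism given by a central element $c\in Z(G)$. It then suffices to show $c\in Z_{\mathbb R}$, i.e. $\sigma_G(c)=c$, for then $(E_G,\rho)$ meets the defining condition of a pseudo-real structure. The conceptual way to see this is to use that $E_G\mapsto\sigma^*\overline{E}_G$ is an involutive operation on morphisms, so that applying $\sigma^*\overline{\,\cdot\,}$ to $\rho$ twice returns $\rho$; this yields $\sigma^*\overline{\tau}=\rho\circ(\sigma^*\overline{\rho})=\rho\,\tau\,\rho^{-1}$. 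Conjugating the central automorphism $\tau=c$ by $\rho$ gives the central element $c$ on $\sigma^*\overline{E}_G$, whereas transporting $c$ through the twist by $\sigma_G$ shows that $\sigma^*\overline{\tau}$ is the central element $\sigma_G(c)$; comparing the two gives $\sigma_G(c)=c$.

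The hard part will be this last step, namely pinning down precisely how the anti-holomorphic operation $\sigma^*\overline{\,\cdot\,}$ acts on $\mathrm{Aut}(E_G)=Z(G)$, and confirming that involutivity forces $c$ to be $\sigma_G$-fixed rather than merely central. To make this rigorous and to guard against errors of sign or conjugation, I would verify it in \v{C}ech data: writing $E_G$ by holomorphic transition functions $g_{ij}$, the bundle $\sigma^*\overline{E}_G$ has transition functions $\sigma_G(g_{ij}\circ\sigma)$ and $\rho$ is given by local maps $\rho_i\colon U_i\to G$, whence $\tau_i=\sigma_G(\rho_i\circ\sigma)\cdot\rho_i=c$. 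Applying $\sigma_G$ and using $\sigma_G^2=\mathrm{id}$ gives $\sigma_G(c)=\rho_i(\sigma(x))\,\sigma_G(\rho_i(x))$, while substituting $\sigma(x)$ for $x$ and using $\sigma^2=\mathrm{id}$ gives $c=\sigma_G(\rho_i(x))\,\rho_i(\sigma(x))$. Setting $a=\sigma_G(\rho_i(x))$ and $b=\rho_i(\sigma(x))$, these read $c=ab$ and $\sigma_G(c)=ba$; since $c=ab$ is central it commutes with $a$, whence $ba=a^{-1}(ab)a=c$, so $\sigma_G(c)=c$, completing the proof.
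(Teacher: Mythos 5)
Your proposal is correct and takes essentially the same route as the paper: the two directions are decomposed identically, the genus hypothesis enters through the same key fact (for ${\rm genus}(X)\,\geq\,3$ the smooth locus ${\mathcal M}^s_X(G)$ is exactly the locus of \emph{regularly stable} bundles, i.e.\ those with ${\rm Aut}(E_G)\,=\,Z$, which the paper quotes from Biswas--Hoffmann), an isomorphism $\rho\,:\,E_G\,\longrightarrow\,\sigma^*\overline{E}_G$ is fixed, and the composite $(\sigma^*\overline{\rho})\circ\rho$ is identified with a central element whose $\sigma_G$--invariance is then forced. The only differences are cosmetic: the paper gets $\sigma_G(c)\,=\,c$ by observing that $\theta$ commutes with $(\sigma^*\overline{\theta})\circ\theta$ (under the identification of total spaces, $\sigma^*\overline{\theta}$ coincides with $\theta$) and comparing with the $\sigma_G$--twisted equivariance, which is precisely the algebra packaged in your identity $\sigma^*\overline{\tau}\,=\,\rho\,\tau\,\rho^{-1}$ and your \v{C}ech computation; note also that your sentence ``Since $E_G$ is stable, ${\rm Aut}(E_G)\,=\,Z(G)$'' should say \emph{regularly} stable (stability alone only gives that ${\rm Aut}(E_G)/Z$ is finite), though your preceding appeal to the genus bound makes clear this is what you meant.
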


\section{Principal bundles and semistability over a curve
defined over $\mathbb R$}

\subsection{Pseudo-real principal bundles}\label{sec2.1}

Let $G$ be a connected complex algebraic group. Let $\overline{G}$
be the complex algebraic group given by $G$ using the automorphism
of the base field $\mathbb C$ defined by $\lambda\,
\longmapsto\, \overline{\lambda}$. Let
\begin{equation}\label{e1}
\sigma_G\, :\, G\, \longrightarrow\, G
\end{equation}
be an anti-holomorphic involutive homomorphism,
meaning $\sigma_G\circ \sigma_G\, =\, \text{Id}_G$. So
$\sigma_G$ is an algebraic isomorphism of $G$ with
$\overline{G}$. Let
\begin{equation}\label{e2}
G_{\mathbb R}\, \subset\, G
\end{equation}
be the fixed point set for $\sigma_G$; it is a real
analytic group. The center of $G$ will be denoted by $Z$.
The intersection
\begin{equation}\label{e3}
Z_{\mathbb R}\, :=\, Z\bigcap G_{\mathbb R}
\end{equation}
coincides with the center of $G_{\mathbb R}$. Indeed, this follows
immediately from the fact that $G_{\mathbb R}$ is Zariski
dense in $G$.

Let $X$ be a compact connected Riemann surface equipped
with an anti--holomorphic involution
\begin{equation}\label{e4}
\sigma\, :\, X\, \longrightarrow\, X\, .
\end{equation}

Let $E_G\, \longrightarrow\, X$ be a holomorphic principal
$G$--bundle over $X$. Let
$$
\overline{E}_G\, =\, E_G(\sigma_G) \, :=\, E_G\times^{\sigma_G} G
\, \longrightarrow\, X
$$
be the $C^\infty$ principal $G$--bundle over $X$ obtained by
extending the structure group of $E_G$ using the homomorphism
$\sigma_G$ in \eqref{e1}. So $\overline{E}_G$ is a holomorphic
principal $\overline{G}$--bundle over $X$.

\begin{remark}\label{rem1}
{\rm The total space of $\overline{E}_G$
is canonically identified with the total space of $E_G$.
This identification sends any $z\, \in\, E_G$ to the element
of $\overline{E}_G$ given by $(z\, ,e)\, \in\, E_G\times G$;
recall that the total space of $\overline{E}_G$ is the quotient
of $E_G\times G$ where two elements $(z_1\, ,g_1)$ and
$(z_2\, ,g_2)$ of $E_G\times G$ are identified in $\overline{E}_G$
if and only if
there is an element $h\, \in\, G$ such that $z_2\,=\, z_1h$
and $g_2\,=\, \sigma_G(h^{-1})g_1$. The inverse map
$\overline{E}_G\, \longrightarrow\, E_G$ is obtained from
the map $E_G\times G\, \longrightarrow\, E_G$ defined by
$(z\, ,g)\, \longmapsto\, z\sigma^{-1}_G(g)$.}
\end{remark}

\begin{remark}\label{rem0}
{\rm We note that there is no natural holomorphic structure
on the principal $G$--bundle $\overline{E}_G$ over $X$
because $\sigma_G$ is not holomorphic (although it is a
holomorphic principal $\overline G$--bundle).
On the other hand, since both $\sigma_G$ and
$\sigma$ (defined in \eqref{e4}) are anti--holomorphic, the
pulled back principal $G$--bundle $\sigma^*\overline{E}_G$ has
a natural holomorphic structure given by the holomorphic structure
on $E_G$. To describe the holomorphic structure on 
$\sigma^*\overline{E}_G$, first note that the total space of
$\sigma^*\overline{E}_G$ is naturally identified with the
total space of $\overline{E}_G$; one can keep the set (for
the total space) fixed, and
simply modify the projection to $X$ by $\sigma$. In Remark 
\ref{rem1} we saw
that the total space of $\overline{E}_G$ is identified with the
total space of $E_G$;
indeed it is the same set, but with the action 
of $G$ twisted by $\sigma_G$. Combining these two identifications,
we obtain a natural identification of $\sigma^*\overline{E}_G$ with 
$E_G$; this identification does not commute with projection to $X$,
however, but must be intertwined with $\sigma$, i.e., the
identification is a lift of $\sigma$. With this identification, the 
holomorphic structure on 
$\sigma^*\overline{E}_G$ is uniquely determined by the condition
that the identification between the total spaces of
$\sigma^*\overline{E}_G$ and $E_G$ is anti-holomorphic.}
\end{remark}

As both $\sigma$ and $\sigma_G$ are involutions, the principal
$G$--bundles $\sigma^*\sigma^*E_G$ and $\overline{\overline{E}}_G$
are identified with $E_G$. We also note that $\sigma^*\overline{E}_G$
is identified with $\overline{\sigma^*E}_G$.

For any $z\, \in\, Z$ (see \eqref{e3}), and any
holomorphic principal $G$--bundle $F_G$, the map $F_G\, 
\longrightarrow\, F_G$ defined by $y\, \longmapsto\, yz$ is a
holomorphic isomorphism of principal $G$--bundles.
For any $z\, \in\, Z\setminus\{e\}$, the holomorphic
automorphism of $F_G$ given by $z$ is nontrivial. Therefore,
we have
\begin{equation}\label{cZ}
Z\, \subset\, \text{Aut}(F_G)\, ,
\end{equation}
where $\text{Aut}(F_G)$ is the group of all holomorphic automorphisms
of the principal $G$--bundle $F_G$ over the identity map of $X$.

\begin{definition}\label{def1}
{\rm A} pseudo-real {\rm principal $G$--bundle on $X$ is a pair
of the form $(E_G\, ,\rho)$, where $E_G\, \longrightarrow\, X$ is a 
holomorphic principal $G$--bundle, and}
$$
\rho\, :\, E_G\, \longrightarrow\, \sigma^*\overline{E}_G
$$
{\rm is a holomorphic isomorphism of principal $G$--bundles satisfying
the condition that there is an element $c\, \in\, Z_{\mathbb R}$
such that the composition}
$$
E_G\, \stackrel{\rho}{\longrightarrow}\, \sigma^*\overline{E}_G
\, \stackrel{\sigma^*\overline{\rho}}{\longrightarrow}\,
\sigma^*\overline{\sigma^*\overline{E}}_G\,=\,
\sigma^*\sigma^*\overline{\overline{E}}_G\, =\, E_G
$$
{\rm coincides with the automorphism of $E_G$ given by $c$.}
\end{definition}

Some clarifications on the above definition are in order.
The morphism
$$
\overline{\rho}\, :\,\overline{E}_G\, \longrightarrow\,
\overline{\sigma^*\overline{E}}_G
$$
in Definition \ref{def1} is the one given by $\rho$ using the
natural identifications of $E_G$ and $\sigma^*\overline{E}_G$
with $\overline{E}_G$ and $\overline{\sigma^*\overline{E}}_G$
respectively (see Remark \ref{rem1}).
Since $\sigma^*\overline{E}_G\,=\, \overline{\sigma^*E}_G$, 
it follows immediately that $\sigma^*\overline{\sigma^*\overline{E}}_G
\,=\, \sigma^*\sigma^*\overline{\overline{E}}_G$.
The element $c$ in Definition \ref{def1} is unique because
$Z$ is a subgroup of $\text{Aut}(E_G)$ (see \eqref{cZ}).

An \textit{isomorphism} between
two pseudo-real principal $G$--bundles $(E_G\, ,\rho)$ and
$(F_G\, ,\delta)$ is a holomorphic isomorphism of principal
$G$--bundles
$$
\mu\, :\, E_G\, \longrightarrow\, F_G
$$
such that the following diagram commutes:
$$
\begin{matrix}
E_G & \stackrel{\rho}{\longrightarrow} & \sigma^*\overline{E}_G\\
~\Big\downarrow\mu && ~\,~\,~
\,\,\,\Big\downarrow \sigma^*\overline{\mu}\\
F_G & \stackrel{\delta}{\longrightarrow} & \sigma^*\overline{F}_G
\end{matrix}
$$
where $\sigma^*\overline{\mu}$ is the holomorphic isomorphism of 
principal $G$--bundles given by $\mu$; the map $\sigma^*\overline{\mu}$
coincides with $\mu$ using the identification of the total
spaces of $E_G$ and $F_G$ with $\sigma^*\overline{E}_G$
and $\sigma^*\overline{F}_G$ respectively (see Remark \ref{rem0}).

\begin{definition}\label{def2}
{\rm A} real {\rm principal $G$--bundle on $X$ is a pseudo-real
principal $G$--bundle $(E_G\, ,\rho)$ such that the composition}
$$
E_G\, \stackrel{\rho}{\longrightarrow}\, \sigma^*\overline{E}_G
\, \stackrel{\sigma^*\overline{\rho}}{\longrightarrow}\,
\sigma^*\overline{\sigma^*\overline{E}}_G\,=\,
\sigma^*\sigma^*\overline{\overline{E}}_G\, =\, E_G
$$
{\rm is the identity automorphism.}
\end{definition}

Therefore, a pseudo-real principal $G$--bundle $(E_G\, ,\rho)$ as in
Definition \ref{def1} is real if and only if $c\,=\, e$.

An alternate way of viewing these structures is as anti-holomorphic
lifts
\begin{equation}\label{sl}
\begin{matrix} E_G &\buildrel{\widetilde \sigma}\over 
{\longrightarrow}& E_G\\
\Big\downarrow&&\Big\downarrow\\ X&\buildrel{ \sigma}\over 
{\longrightarrow}& X\end{matrix}
\end{equation}
such that ${\widetilde\sigma} (z\cdot g)\,=\, 
{\widetilde\sigma} (z)\cdot\sigma_G (g)$ and the composition
${\widetilde\sigma}\circ{\widetilde\sigma}$ is
fiberwise multiplication by $c$;
see Remark \ref{rem0}.

Consider $\widetilde \sigma$ in \eqref{sl}.
We can modify the lift $\widetilde \sigma$ by the action of an
element $a$ of $Z$ as follows: $\widetilde{\sigma}'(z) \,:=\,
\widetilde\sigma (z) \cdot a$.
This then gives the constraint $a\sigma_G(a) \,\in\, Z_{\mathbb R}$,
and the element $c$ gets replaced by $a\sigma_G(a)c\,=\,
ca\sigma_G(a)$.

In particular, we can take $a$ lying in $Z_{\mathbb R}$, and the
composition gets altered by $a^2$. Therefore, if $c$ lies in
$Z_{\mathbb R}^2\, :=\, \{z^2\, \mid\, z \,\in\, Z_{\mathbb R}\}$,
or more generally is of the form $\sigma_G(a)a$, we
can normalize our pseudo-real structure to a real one.

Let $Z_{\mathbb R}(2)$ be the group of points of $Z_{\mathbb R}$
of order two. The natural homomorphism $Z_{\mathbb
R}(2)\, \longrightarrow\, Z_{\mathbb R}/Z_{\mathbb R}^2$ is
surjective. So one can assume, as we will from now on, that the
element $c$ in Definition \ref{def1}
is of order two. In particular, it lies in any
maximal compact subgroup of $G$.

\subsection{Stable and semistable principal bundles}

Henceforth, we assume that the group $G$ is reductive.

Let $(E_G\, ,\rho)$ be a pseudo-real principal $G$--bundle
over $X$. Let
$$
\text{Ad}(E_G)\, :=\, E_G\times^G G\, \longrightarrow\, X
$$
be the group-scheme over $X$ associated to $E_G$ for the adjoint
action of $G$ on itself. So $\text{Aut}(E_G)$ in \eqref{cZ}
is the space of all holomorphic sections of $\text{Ad}(E_G)$.
Consider the $C^\infty$ principal $G$--bundle $\overline{E}_G$.
Since it is, by definition, the extension of structure group of 
$E_G$ by the isomorphism $\sigma_G$ in \eqref{e1}, the
homomorphism $\sigma_G$ induces a $C^\infty$ isomorphism
\begin{equation}\label{alpha}
\alpha\, :\, \text{Ad}(E_G)\, \longrightarrow\,
\text{Ad}(\overline{E}_G)\, ,
\end{equation}
where $\text{Ad}(\overline{E}_G)$ is the adjoint bundle
for $\overline{E}_G$. More precise, both $\text{Ad}(E_G)$
and $\text{Ad}(\overline{E}_G)$ are quotients of $E_G\times G$;
the map $\alpha$ in \eqref{alpha} is the descent of the
self-map $\text{Id}_{E_G}\times \sigma_G$ of $E_G\times G$.
Note that for each point $x\, \in\, X$, the restriction
$$
\alpha(x) \, :\, \text{Ad}(E_G)_x\, \longrightarrow\,
\text{Ad}(\overline{E}_G)_x
$$
is an isomorphism of groups.

The isomorphism $\rho$ produces a holomorphic isomorphism
$$
\text{Ad}(E_G)\, \longrightarrow\,
\text{Ad}(\sigma^* \overline{E}_G)\,=\, \sigma^* \text{Ad}(
\overline{E}_G)\, .
$$
Combining this with $\alpha$ in \eqref{alpha}, we obtain
an anti-holomorphic automorphism $\widetilde{\rho}$ on 
$\text{Ad}(E_G)$ over $\sigma$, meaning $\widetilde{\rho}$ fits
in the commutative diagram
\begin{equation}\label{e5}
\begin{matrix}
\text{Ad}(E_G) &\stackrel{\widetilde{\rho}}{\longrightarrow}&
\text{Ad}(E_G)\\
\Big\downarrow && \Big\downarrow\\
X&\stackrel{\sigma}{\longrightarrow}& X
\end{matrix}
\end{equation}
We note that $\widetilde{\rho}\circ \widetilde{\rho}\,=\,
\text{Id}_{\text{Ad}(E_G)}$ because
the adjoint action of $Z_{\mathbb R}$ on $G$ is the trivial one.

Let
$$
\text{ad}(E_G)\, :=\, E_G\times^G{\mathfrak g}
\, \longrightarrow\, X
$$
be the bundle of Lie algebras over $X$ associated to $E_G$ for the 
adjoint action of $G$ on ${\mathfrak g}\, :=\, \text{Lie}(G)$;
it is called the \textit{adjoint} vector bundle.

A \textit{proper parabolic} subgroup-scheme of $\text{Ad}(E_G)$ is
a Zariski closed analytically locally trivial proper
subgroup-scheme $P\,\subset\,
\text{Ad}(E_G)$ such that $\text{Ad}(E_G)/P$ is compact. For
an analytically locally trivial
subgroup-scheme $P\,\subset\, \text{Ad}(E_G)$, let
${\mathfrak p}\, \subset\, \text{ad}(E_G)$ be the bundle of
Lie subalgebras corresponding to $P$.

\begin{definition}\label{def3}
{\rm A pseudo-real principal $G$--bundle $(E_G\, ,\rho)$
over $X$ is called} semistable {\rm (respectively,} stable{\rm{)}
if for every proper parabolic subgroup-scheme $P\,\subset\, 
\text{Ad}(E_G)$ such that $\widetilde{\rho}(P)\, \subset\, P$,
where $\widetilde{\rho}$ is constructed in \eqref{e5},}
$$
{\rm degree}({\mathfrak p})\, \leq\, 0~\, ~\,~{\rm (respectively,~
{\rm degree}({\mathfrak p})\, <\, 0)}
$$
{\rm (the vector bundle ${\mathfrak p}$ is defined above).}
\end{definition}

The above definition coincides with the one in \cite[page 304,
Definition 8.1]{Be}, but it differs from the definition
of (semi)stability given in \cite{Ra}. The definitions of Behrend
and Ramanathan are equivalent if the base field is
$\mathbb C$. The difference between the definitions in
\cite{Be} and \cite{Ra} of (semi)stable
principal bundles will be explained in Section \ref{sec-2.3}.

For holomorphic vector bundles on $X$ (so the base field
is $\mathbb C$), we will adopt Ramanathan's
definition of (semi)stability. We reiterate that in this
case, this definition is equivalent to the one given in \cite{Be}.

See \cite{Ra}, \cite[page 221, Definition 3.5]{AB} for the definition
of a polystable principal bundle over a compact Riemann surface.

\begin{proposition}\label{prop1}
A pseudo-real principal $G$--bundle $(E_G\, ,\rho)$ over $X$ is
semistable (respectively, stable) if the principal 
$G$--bundle $E_G$ is semistable (respectively, stable).

For a semistable pseudo-real principal $G$--bundle $(E_G\, ,\rho)$, 
the principal $G$--bundle $E_G$ is semistable.

If $(E_G\, ,\rho)$ is a stable pseudo-real principal $G$--bundle
over $X$, then the principal $G$--bundle $E_G$ is polystable.
\end{proposition}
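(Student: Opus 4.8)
The plan is to treat the three assertions in order, reducing everything to the relationship between parabolic subgroup-schemes of $\text{Ad}(E_G)$ and reductions of structure group of $E_G$, together with the constraint imposed by the involution $\widetilde\rho$.

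For the first assertion I would argue by contraposition on the definitions. Suppose $(E_G\, ,\rho)$ is not semistable (respectively, not stable); then there is a proper parabolic subgroup-scheme $P\, \subset\, \text{Ad}(E_G)$ with $\widetilde\rho(P)\, \subset\, P$ and $\text{degree}(\mathfrak p)\, >\, 0$ (respectively, $\geq\, 0$). Such a $P$ is in particular an ordinary proper parabolic subgroup-scheme of $\text{Ad}(E_G)$, i.e. it corresponds to a reduction of structure group of $E_G$ to a parabolic subgroup of $G$, and the Behrend condition with $\text{degree}(\mathfrak p)\, >\, 0$ (respectively, $\geq\, 0$) then witnesses that $E_G$ fails to be semistable (respectively, stable) in the sense of Behrend. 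Since over $\mathbb C$ the Behrend and Ramanathan notions agree (as recalled after Definition \ref{def3}), this shows $E_G$ is not semistable (respectively, not stable). Contrapositively, if $E_G$ is semistable (respectively, stable) then $(E_G\, ,\rho)$ is semistable (respectively, stable). Here I use that imposing the extra $\widetilde\rho$-invariance only restricts the supply of destabilizing parabolics, so a bound holding for all parabolics holds a fortiori for the invariant ones.

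For the second assertion, the direction to prove is the converse, and this is where the main work lies. Assume $(E_G\, ,\rho)$ is semistable but, for contradiction, that $E_G$ is not semistable as a holomorphic $G$--bundle. Then $E_G$ has a canonical (Harder--Narasimhan type) destabilizing reduction, which for principal bundles is the canonical parabolic reduction: there is a distinguished parabolic subgroup-scheme $P_{\mathrm{can}}\, \subset\, \text{Ad}(E_G)$, unique by its maximality/canonicity, with $\text{degree}$ of its Lie algebra bundle positive. The key point is that uniqueness forces $P_{\mathrm{can}}$ to be preserved by the anti-holomorphic involution $\widetilde\rho$: since $\widetilde\rho$ covers $\sigma$ and $E_G$ is isomorphic to $\sigma^*\overline E_G$ via $\rho$, the bundle $\sigma^*\overline E_G$ has the same canonical parabolic, and transporting it back along $\widetilde\rho$ must return $P_{\mathrm{can}}$ by its canonical characterization. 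Thus $\widetilde\rho(P_{\mathrm{can}})\, \subset\, P_{\mathrm{can}}$ and $\text{degree}(\mathfrak p_{\mathrm{can}})\, >\, 0$, contradicting semistability of $(E_G\, ,\rho)$. The main obstacle is verifying that the canonical reduction is genuinely intrinsic so that it is respected by $\widetilde\rho$; I expect to invoke the uniqueness of the Harder--Narasimhan/canonical parabolic reduction and to check that $\widetilde\rho$, being anti-holomorphic and compatible with the holomorphic structures via Remark \ref{rem0}, carries canonical reductions to canonical reductions.

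For the third assertion the argument is analogous but uses the polystable (socle/Jordan--H\"older) structure in place of the Harder--Narasimhan filtration. Assume $(E_G\, ,\rho)$ is stable. By the first two assertions $E_G$ is semistable; I must upgrade this to polystable. If $E_G$ were not polystable, then among its admissible parabolic reductions of degree zero there is a distinguished one measuring the failure of polystability — concretely, the socle reduction, which is canonical among degree-zero reductions. By the same uniqueness argument its associated parabolic subgroup-scheme $P_0\, \subset\, \text{Ad}(E_G)$ is preserved by $\widetilde\rho$ and satisfies $\text{degree}(\mathfrak p_0)\, =\, 0$. This contradicts the stability of $(E_G\, ,\rho)$, which demands $\text{degree}(\mathfrak p)\, <\, 0$ strictly for every $\widetilde\rho$-invariant proper parabolic. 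Hence $E_G$ is polystable. As before, the crux is the canonicity/uniqueness of the relevant reduction ensuring $\widetilde\rho$-invariance; granting that, the three statements follow.
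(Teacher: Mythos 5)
Your proposal is correct and follows essentially the same route as the paper: part one by noting that $\widetilde\rho$-invariance only shrinks the supply of test parabolics (combined with the correspondence between parabolic subgroup-schemes of $\mathrm{Ad}(E_G)$ and parabolic reductions of $E_G$), part two by showing the canonical Harder--Narasimhan reduction is $\widetilde\rho$-invariant by uniqueness and has Lie algebra bundle of positive degree, and part three by the same invariance argument applied to the socle-based admissible reduction of degree zero. The one verification you defer --- that $\widetilde\rho$ genuinely carries canonical reductions to canonical reductions --- is exactly what the paper supplies concretely, by observing that the subbundle correspondence $V \mapsto V'$ between $\mathrm{ad}(E_G)$ and $\mathrm{ad}(\sigma^*\overline{E}_G)$ preserves rank and degree, hence takes the Harder--Narasimhan filtration (respectively, the socle) to the Harder--Narasimhan filtration (respectively, the socle), after which the holomorphic isomorphism $\rho_{\mathrm{ad}}$ must match them up.
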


\begin{proof}
Given a parabolic subgroup--scheme $P\, \subset\, \text{Ad}(E_G)$,
there is a parabolic subgroup $Q\, \subset\, G$ and a
holomorphic reduction of
structure group $E_Q\, \subset\, E_G$ such that the subgroup--scheme
$\text{Ad}(E_Q)\, \subset\, \text{Ad}(E_G)$ coincides with $P$
(see the proof of Lemma 2.11 in \cite{AB}). (The parabolic 
subgroup--scheme $P$ defines a conjugacy class of parabolic subgroups
of $G$, and the reduction of
structure group $E_Q$ depends only on the choice of a parabolic
subgroup $Q$ in this conjugacy class.) Since any parabolic 
subgroup--scheme is given by a reduction of
structure group to a parabolic subgroup, it follows immediately that
$(E_G\, ,\rho)$ is semistable (respectively, stable) if the principal
$G$--bundle $E_G$ is semistable (respectively, stable).

To prove that $E_G$ is semistable if $(E_G\, ,\rho)$ is so,
assume that $E_G$ is not semistable. Then the
adjoint vector bundle $\text{ad}(E_G)$ is not semistable
\cite[page 698, Lemma 3]{AAB}. Using the Harder--Narasimhan
filtration of $\text{ad}(E_G)$, we get a parabolic subalgebra
bundle $\mathfrak p$ of $\text{ad}(E_G)$, which in turn gives a 
parabolic subgroup-scheme
\begin{equation}\label{P}
P\,\subset\, \text{Ad}(E_G)
\end{equation}
\cite[page 699, Lemma 4]{AAB}. We recall from \cite{AAB} that the 
Harder--Narasimhan filtration of $\text{ad}(E_G)$ is of the form
\begin{equation}\label{fi}
0\,=\, E_{-\ell-1} \,\subset\, E_{-\ell}\,\subset\,\cdots\,\subset
\, E_{-1}\,\subset\, E_0 \,\subset\, E_1\,\subset\,\cdots
\,\subset\, E_{\ell-1}\,\subset\, E_\ell
\,=\, \text{ad}(E_G)
\end{equation}
for some positive integer $\ell$. The subbundle $\mathfrak p$
mentioned above is $E_0$ in \eqref{fi}.

Let $\beta\, :\, \text{ad}(E_G)\, \longrightarrow\,
\text{ad}(\overline{E}_G)$ be the $C^\infty$ isomorphism
given by $\alpha$ in \eqref{alpha}. Using $\beta$,
any $C^\infty$ subbundle $V\, \subset\, \text{Ad}(E_G)$ produces
a subbundle of $\text{ad}(\sigma^*\overline{E}_G)
\,=\, \sigma^*\text{ad}(\overline{E}_G)$, which we will
denote by $V'$. This subbundle $V'$ is holomorphic if and only
if $V$ is a holomorphic subbundle. (If $h\, :\, M\,
\longrightarrow\, N$ is an anti-holomorphic isomorphism between
complex manifolds, then a $C^\infty$ submanifold $M'\, \subset\,
M$ is complex analytic if and only if $f(M')$ is complex analytic.)

For $i\, \in\, [-\ell-1\,
, \ell]$, let
$$
E'_i\, \subset\, \text{ad}(\sigma^*\overline{E}_G)
$$
be the holomorphic subbundle corresponding to $E_i$ in \eqref{fi}.

Let
\begin{equation}\label{ad}
\rho_{\rm ad}\, :\, \text{ad}(E_G)\, \longrightarrow\,
\text{ad}(\sigma^*\overline{E}_G)
\end{equation}
be the holomorphic isomorphism induced by $\rho$. Note that
$\rho_{\rm ad}$ is an involution because the central element
$c$ in Definition \ref{def1} acts trivially on $\mathfrak g$.
Since $$\text{rank}(E'_i)\,=\, \text{rank}(E_i)~\,\text{~and~}\,~
\text{degree}(E'_i)\,=\, \text{degree}(E_i)\, ,$$ and
$\text{ad}(E_G)$ is holomorphically isomorphic to
$\text{ad}(\sigma^*\overline{E}_G)$, we conclude that
$$
0\,=\, E'_{-\ell-1} \,\subset\, E'_{-\ell}\,\subset\,\cdots
\,\subset\, E'_{-1}\,\subset\, E'_0 \,\subset\, E'_{1}
\,\subset\,\cdots\,\subset\, E'_{\ell-1} \,\subset\, E'_\ell
\,=\, \text{ad}(\sigma^*\overline{E}_G)
$$
is the Harder--Narasimhan filtration of 
$\text{ad}(\sigma^*\overline{E}_G)$. Indeed, for any
$i\, \in\, [-\ell-1\, ,\ell-1]$, the quotient 
$E_{i+1}/E_i$ is the unique subbundle of $\text{ad}(E_G)/E_i$
of maximal rank among the subbundles of maximal slope. From this
it follows that the above filtration of 
$\text{ad}(\sigma^*\overline{E}_G)$
coincides with its Harder--Narasimhan filtration.

Since any holomorphic isomorphism between vector bundles preserves
their Harder--Narasimhan filtrations, we conclude that
$$
\rho_{\rm ad}(E_0)\, =\, E'_0\, ,
$$
where $\rho_{\rm ad}$ is constructed in \eqref{ad}.
This implies that $\widetilde{\rho}(P)\, \subset\, P$,
where $\widetilde{\rho}$ is constructed in \eqref{e5}, and
$P$ is the subgroup-scheme in \eqref{P}. 

We note that
\begin{equation}\label{da}
\text{degree}(\text{ad}(E_G))\,=\, 0\, .
\end{equation}
Indeed, any $G$--invariant nondegenerate symmetric bilinear form on
$\mathfrak g$ produces a nondegenerate symmetric bilinear form on
$\text{ad}(E_G)$. This bilinear form on $\text{ad}(E_G)$
identifies $\text{ad}(E_G)$ with its dual $\text{ad}(E_G)^*$.
In particular, \eqref{da} holds.

We have
$$
\text{degree}(E_0/E_{-1})\,=\, 0
$$
implying that $\text{degree}(E_{-1})\,> \, 0$
(see \cite[page 216]{AB}). Hence in view of \eqref{da} we
conclude that the subbundle $E_0\,=\, {\mathfrak p}
\, \subset\, \text{ad}(E_G)$
violates the inequality in Definition \ref{def3}.
So $(E_G\, ,\rho)$ is not semistable. This proves the second part
of the proposition.

To prove the third part of the proposition,
let $(E_G\, ,\rho)$ be a stable pseudo-real principal $G$--bundle
over $X$. From the second part of the proposition we know that
$E_G$ is semistable. Assume that $E_G$ is not polystable.
Since $E_G$ is semistable but not polystable, the adjoint
vector bundle $\text{ad}(E_G)$ is semistable but not polystable
(see \cite[page 214, Proposition 2.10]{AB} and \cite[page 224,
Corollary 3.8]{AB}). Let
\begin{equation}\label{so}
S\, \subset\, \text{ad}(E_G)
\end{equation}
be the unique maximal polystable subbundle, which
is called \textit{socle}, of $\text{ad}(E_G)$
(see \cite[page 23, Lemma 1.5.5]{HL}).

We noted in the proof of the second part
of the proposition that any holomorphic
subbundle of $\text{ad}(E_G)$ produces a holomorphic subbundle
of $\text{ad}(\sigma^*\overline{E}_G)$. Let
$$
S'\, \subset\, \text{ad}(\sigma^*\overline{E}_G)
$$
be the holomorphic subbundle corresponding to the socle $S$.

We will show that
\begin{equation}\label{s}
\rho_{\rm ad}(S)\, =\, S'\, ,
\end{equation}
where $\rho_{\rm ad}$ is constructed in \eqref{ad}. 
To prove this, first note that $\text{ad}(\sigma^*\overline{E}_G)$
is semistable but not polystable, because $\rho_{\rm ad}$ is an
isomorphism of it with $\text{ad}(E_G)$. The holomorphic
subbundles of $S$ are in bijective correspondence with the
holomorphic subbundles of $S'$ by the earlier described
correspondence between subbundles of $\text{ad}(E_G)$ and
subbundles of
$\text{ad}(\sigma^*\overline{E}_G)$. Since this correspondence
preserves both rank and degree, it follows that $S'$ is the
socle of $\text{ad}(\sigma^*\overline{E}_G)$. Now \eqref{s}
follows from the uniqueness of the socle.

Let $Z_0$ be the connected component, containing the
identity element, of the center $Z$ of $G$.
A holomorphic reduction of structure
group $E_Q\, \subset\, E_G$ to a parabolic subgroup $Q$ of $G$
is called \textit{admissible} if for any character $\chi$ of $Q$
trivial on $Z_0$, the associated line bundle $E_Q\times^\chi \mathbb C$
over $X$
is of degree zero. A character $\chi$ of $Q$ is called \textit{strictly
antidominant} if $\chi$ is trivial on $Z_0$, and the associated
line bundle $G\times^\chi \mathbb C$ on $G/Q$ is ample (we are
using the fact that the natural projection $G\, \longrightarrow\, G/Q$
is a principal $Q$--bundle).

We will recall a construction from \cite{AB}.

We have the \textit{socle filtration} of $\text{ad}(E_G)$
$$
S\, :=\, S_0\, \subset\, S_1\, \subset\,\cdots \, \subset\,
S_{k-1}\, \subset\, S_k\, :=\, \text{ad}(E_G)
$$
where $S_i/S_{i-1}$ is the socle of $\text{ad}(E_G)/S_{i-1}$
for all $i\, \in\, [1\, ,k]$.
Using this socle filtration, we get a holomorphic reduction of
structure group
$$
E_Q\, \subset\, E_G\, ,
$$
where
\begin{itemize}
\item $Q\, \subsetneq\, G$
is maximal among all the proper parabolic subgroups $Q'$ of $G$
such that $E_G$ has an admissible reduction of structure group
$$
E'_{Q'}\, \subset\, E_G
$$
for which the associated principal $L(Q')$--bundle 
$$E_{L(Q')}\, =\, E'_{Q'}/R_u(Q')$$
is polystable, where $L(Q')\, :=\, Q'/R_u(Q')$ is the Levi quotient
of $Q'$, and $R_u(Q')$ is the unique maximal normal unipotent
subgroup of $Q'$ (also called the unipotent radical of $Q'$),

\item $E_Q$ is a holomorphic reduction
of structure group of $E_G$ to $Q$ such that the associated
principal $L(Q)$--bundle is polystable, where $L(Q)$ is the Levi
quotient of $Q$.
\end{itemize}
The pair $(Q\, , E_Q)$ is unique in the following sense:
for any other pair $(Q_1\, , E_{Q_1})$ satisfying
the above conditions, there is some $g\, \in\, G$ such that
$Q_1\, =\, g^{-1}Qg$, and $E_{Q_1}\, =\, E_Qg$.
(See \cite[page 218]{AB}.)

Note that from the above relations between $(Q\, , E_Q)$ and
$(Q_1\, , E_{Q_1})$ it follows that the two subgroup-schemes
$\text{Ad}(E_Q)$ and $\text{Ad}(E_{Q_1})$ of $\text{Ad}(E_G)$
coincide.

{}From \eqref{s} it can be deduced that 
$\widetilde{\rho}(\text{Ad}(E_Q))\, =\, \text{Ad}(E_Q)$, where 
$\widetilde{\rho}$ is constructed in \eqref{e5}. Indeed, this
is evident from the construction of $\text{Ad}(E_Q)$ using
the socle filtration.

Since the reduction $E_Q\, \subset\, E_G$ is admissible, and
the adjoint action of $Z_0$ on $\mathfrak g$ is trivial,
the degree of the line bundle on $X$ associated to the
principal $Q$--bundle $E_Q$ for the
character of $Q$ given by the $Q$--module $\bigwedge^{\rm top}
{\rm Lie}(Q)$ is zero. In other words,
${\rm degree}(\text{ad}(E_Q))\,=\, 0$.
This contradicts the fact that
$(E_G\, ,\rho)$ is a stable pseudo-real principal $G$--bundle.
Therefore, we conclude that $E_G$ is polystable.
\end{proof}

The following is a corollary of Proposition \ref{prop1}:

\begin{corollary}\label{cor-n}
A pseudo-real principal $G$--bundle $(E_G\, ,\rho)$ over $X$ is
semistable if and only if the corresponding adjoint real vector
bundle $({\rm ad}(E_G)\, ,\rho_{\rm ad})$ (defined in
\eqref{ad}) is semistable.
\end{corollary}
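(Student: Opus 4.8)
The plan is to deduce Corollary \ref{cor-n} from Proposition \ref{prop1} together with the standard equivalence, over $\mathbb{C}$, between semistability of a reductive principal $G$--bundle and semistability of its adjoint vector bundle. The key observation is that a pseudo-real structure $\rho$ on $E_G$ induces, via the canonical $C^\infty$ identification $\alpha$ of \eqref{alpha} and the isomorphism $\rho_{\rm ad}$ of \eqref{ad}, precisely the structure of a \emph{real vector bundle} on $\text{ad}(E_G)$: the involution $\widetilde{\rho}$ of \eqref{e5} restricts to an anti-holomorphic automorphism of $\text{ad}(E_G)$ over $\sigma$ squaring to the identity (this squaring property is already noted after \eqref{e5}, using that $Z_{\mathbb R}$ acts trivially in the adjoint representation). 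Thus the pair $(\text{ad}(E_G)\, ,\rho_{\rm ad})$ is a genuine pseudo-real (indeed real) vector bundle, and its semistability in the sense of Definition \ref{def3}, specialized to the case $G\,=\,\text{GL}$, means that every $\widetilde{\rho}$--invariant holomorphic subbundle has non-positive degree.

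First I would dispose of the direction asserting that semistability of $(E_G\, ,\rho)$ implies semistability of the real adjoint bundle. By Proposition \ref{prop1}, if $(E_G\, ,\rho)$ is semistable then $E_G$ is semistable as a holomorphic principal $G$--bundle; and semistability of $E_G$ over $\mathbb{C}$ is equivalent to semistability of the plain vector bundle $\text{ad}(E_G)$ (this is exactly the content of \cite[page 698, Lemma 3]{AAB} invoked in the proof of Proposition \ref{prop1}). A semistable vector bundle has no holomorphic subbundle of positive degree at all, so in particular no $\widetilde{\rho}$--invariant one; hence the real vector bundle $(\text{ad}(E_G)\, ,\rho_{\rm ad})$ is semistable.

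For the converse, I would argue contrapositively, reusing the mechanism already built in the proof of Proposition \ref{prop1}. Suppose $(E_G\, ,\rho)$ is not semistable. Then $E_G$ is not semistable, so $\text{ad}(E_G)$ is not semistable, and its Harder--Narasimhan filtration \eqref{fi} produces the destabilizing subbundle $E_0\,=\,{\mathfrak p}$. The crucial point, established verbatim in the middle of the proof of Proposition \ref{prop1}, is that $\rho_{\rm ad}$ carries the Harder--Narasimhan filtration of $\text{ad}(E_G)$ to that of $\text{ad}(\sigma^*\overline{E}_G)$, forcing $\rho_{\rm ad}(E_0)\,=\,E'_0$, which is exactly the statement that $E_0$ is $\widetilde{\rho}$--invariant. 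The same degree computation there, using \eqref{da} and $\text{degree}(E_0/E_{-1})\,=\,0$, shows $\text{degree}(E_0)\,>\,0$. Thus $E_0$ is a $\widetilde{\rho}$--invariant holomorphic subbundle of positive degree, witnessing that the real vector bundle $(\text{ad}(E_G)\, ,\rho_{\rm ad})$ is not semistable in the sense of Definition \ref{def3}.

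I do not expect any genuine obstacle here, since every ingredient has already been assembled in Proposition \ref{prop1}; the corollary is essentially a repackaging. The one point requiring a line of care is the bookkeeping needed to see that Definition \ref{def3}, when $G\,=\,\text{GL}(V)$, literally reduces to the invariant-subbundle criterion for a real vector bundle, so that the phrase ``$(\text{ad}(E_G)\, ,\rho_{\rm ad})$ is semistable'' is unambiguous; a parabolic subgroup-scheme of $\text{Ad}(E_G)$ for $\text{GL}$ corresponds to a flag of subbundles, and invariance of the flag under $\widetilde{\rho}$ is equivalent to $\rho_{\rm ad}$--invariance of the associated subbundle, with the degree of $\mathfrak{p}$ matching the relevant subbundle degrees. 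Once this dictionary is stated, both implications follow immediately from the two observations above.
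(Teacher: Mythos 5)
Your proposal is correct and follows essentially the same route as the paper: reduce via Proposition \ref{prop1} to semistability of $E_G$, invoke the standard equivalence between semistability of $E_G$ and of ${\rm ad}(E_G)$, and settle the real-structure equivalence by observing that the canonical Harder--Narasimhan destabilizing subbundle is automatically $\rho_{\rm ad}$--invariant. The paper compresses this last step into the parenthetical remark that $\rho_{\rm ad}$ is an involution; your explicit Harder--Narasimhan argument simply spells out what that remark encodes.
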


\begin{proof}
A pseudo-real principal $G$--bundle $(E_G\, ,\rho)$
over $X$ is semistable if and only if $E_G$ is semistable
(Proposition \ref{prop1}), and we know that $E_G$ is semistable
if and only if ${\rm ad}(E_G)$ is semistable
\cite[page 214, Proposition 2.10]{AB}. But ${\rm ad}(E_G)$ is
semistable if and only if $({\rm ad}(E_G)\, ,
\rho_{\rm ad})$ is semistable (recall that $\rho_{\rm ad}$ is an
involution).
\end{proof}

It may be mentioned that the analog of Corollary \ref{cor-n}
for stable bundles is not true. In fact, there are stable
holomorphic vector bundles such that the corresponding
trace zero endomorphism bundle is not stable
(see \cite[page 2212, Example 2 and Remark 2]{HM}).

\subsection{Ramanathan's definition and Behrend's
definition}\label{sec-2.3}

We will first recall Ramanathan's definition of (semi)stability
in our context.

Let $(E_G\, ,\rho)$ be a pseudo-real principal $G$--bundle
over $X$. It will be called \textit{r-semistable}
(respectively, \textit{r-stable}) if for any proper
parabolic subgroup $Q\, \subset\, G$ such that $\sigma_G(Q)\,=\,
Q$, and for any holomorphic reduction of structure group of $E_G$
$$
E_Q\, \subset\, E_G
$$
to $Q$ with $\rho(E_Q)\,=\, E_Q$ (see the next sentence for
a clarification), and for any strictly antidominant 
character $\chi$
of $Q$, the line bundle $E_Q\times^\chi \mathbb C\, \longrightarrow
\, X$ is of nonnegative (respectively, positive) degree. (The
total space of $\sigma^*\overline{E}_G$ is identified with that
of $E_G$, as shown in Remark \ref{rem0}; using this identification,
$\rho(E_Q)$ is considered as a submanifold of $E_G$.)

For any given holomorphic reduction of structure group $E_Q\,
\subset\, E_G$ to $Q\, \subset\, G$, considering the corresponding
subgroup-scheme
$\text{Ad}(E_Q)\,\subset\, E_G$ we conclude that $(E_G\, ,\rho)$
is r-semistable (respectively, r-stable) if $(E_G\, ,\rho)$
is semistable (respectively, stable).

We will construct a r-stable real principal $G$--bundle
$(E_G\, ,\rho)$ which is not semistable.

Take a pair $(X\, ,\sigma)$ as in \eqref{e4}. Take $G\,=\,
\text{GL}(2,{\mathbb C})$, and let $\sigma_G\,=\,
\sigma_{\text{GL}(2,{\mathbb C})}$ be the anti-holomorphic
involution defined by
\begin{equation}\label{A}
A\, \longmapsto\, (\overline{A}^t)^{-1}\, .
\end{equation}
The real subgroup is then ${\rm U}(2)$, and the real structure
corresponds in vector bundle terms to isomorphisms $V\,
\longrightarrow\, \sigma^*(\overline{V}^*)$ (this will be
elaborated later).

Let $L\, \longrightarrow\, X$ be a holomorphic line bundle
with $\text{degree}(L)\, >\, 0$. Define
$$
M\,:=\, (\sigma^*\overline{L})^* ~\,~\text{\, and\,}~\, ~
V\, :=\, L\oplus M\, .
$$
Both $M$ and $V$ are holomorphic vector bundles.
Note that
$$
\sigma^*\overline{V}^*\,=\, \sigma^*\overline{L}^*\oplus
\sigma^*\overline{M}^*\, =\, M\oplus L\, =\, L\oplus M\, .
$$
Therefore, the identity map of $L\bigoplus M$ produces a
holomorphic isomorphism
\begin{equation}\label{r0}
\rho_0\, :\, V\, \longrightarrow\, \sigma^*\overline{V}^*\, .
\end{equation}
The composition
$$
V\, \stackrel{\rho_0}{\longrightarrow}\, \sigma^*\overline{V}^*
\, \stackrel{\sigma^*(\overline{\rho_0}^*)^{-1}}{\longrightarrow}\,
(\sigma^*\overline{\sigma^*\overline{V}^*})^*\,=\,
\sigma^*\sigma^*\overline{\overline{V}}^{**}\,=\, V
$$
is clearly the identity map of $V$.

The vector bundle $V$ defines a principal $\text{GL}(2,{\mathbb
C})$--bundle
over $X$, which we will denote by $E_{\text{GL}_2}$. We recall
that $E_{\text{GL}_2}$ is the space of all $\mathbb C$--linear
isomorphisms from ${\mathbb C}^2$ to the fibers of $V$. Using 
$\rho_0$ in \eqref{r0}, we will construct a map from
$E_{\text{GL}_2}$ to $\sigma^*\overline{E}_{\text{GL}_2}$.

Take a point $x\, \in\, X$, and take any $\psi\, \in\,
(E_{\text{GL}_2})_x$ in the fiber over $x$. So, $\psi$ is
a $\mathbb C$--linear isomorphism from ${\mathbb C}^2$ to $V_x$.
Let
$$
\overline{\psi}\, :\, \overline{\mathbb C}^2\,\longrightarrow\,
\overline{V}_x
$$
be the $\mathbb C$--linear isomorphism defined by $v\, \longmapsto\,
\overline{\psi(\overline{v})}$. Now we have the isomorphism
\begin{equation}\label{c1}
(\overline{\psi}^*)^{-1}\, :\, (\overline{\mathbb C}^2)^*
\,\longrightarrow\, (\overline{V}_x)^*\, .
\end{equation}
On the other hand, the isomorphism $\rho_0$ in \eqref{r0} produces
an isomorphism
$$
\overline{\rho_{0,x}}\, :\, \overline{V}_x
\, \longrightarrow\,(\overline{\sigma^*\overline{V}^*})_x
\,=\, (V_{\sigma(x)})^*
$$
by sending any $w$ to $\overline{\rho_{0,x}(\overline{w})}$, where
$\rho_{0,x}$ is the restriction of $\rho_0$ to $x$. Consider the
corresponding isomorphism
$$
(\overline{\rho_{0,x}}^*)^{-1}\, :\, (\overline{V}_x)^*
\, \longrightarrow\, (V_{\sigma(x)})^{**}\,=\, V_{\sigma(x)}\, .
$$
Let
\begin{equation}\label{c2}
(\overline{\rho_{0,x}}^*)^{-1}\circ (\overline{\psi}^*)^{-1}\, :\, 
(\overline{\mathbb C}^2)^*\, \longrightarrow\, V_{\sigma(x)}
\end{equation}
be the composition of it with the isomorphism in \eqref{c1}.

Consider the standard inner product on ${\mathbb C}^2$; the
standard basis is an orthonormal one. Note that the
$\mathbb C$--linear isometries
of ${\mathbb C}^2$ are the fixed points of the
involution $\sigma_{\text{GL}(2,{\mathbb C})}$ (see \eqref{A}).
This inner product produces a $\mathbb C$--linear isomorphism
of ${\mathbb C}^2$ with $(\overline{\mathbb C}^2)^*$. Let
\begin{equation}\label{c3}
\widetilde{\psi}\, :\, {\mathbb C}^2\, \longrightarrow\, V_{\sigma(x)}
\end{equation}
be the isomorphism given by the one in \eqref{c2} using this
isomorphism of ${\mathbb C}^2$ with $(\overline{\mathbb C}^2)^*$.

Let
$$
\rho_{\text{GL}}\, :\, E_{\text{GL}_2}\, 
\longrightarrow\,\sigma^*\overline{E}_{\text{GL}_2}
$$
be the isomorphism that sends any $\psi$ to $\widetilde{\psi}$
constructed in \eqref{c3}. Note that the total space of
$\overline{E}_{\text{GL}_2}$ is identified with
that of $E_{\text{GL}_2}$
(the maps are constructed in Remark \ref{rem1}); using this
identification, the isomorphism $\widetilde{\psi}$
in \eqref{c3} is considered
as an element of the fiber $(\overline{E}_{\text{GL}_2})_{\sigma(x)}$.

It is straightforward to check that the pair $(E_{\text{GL}_2}\, ,
\rho_{\text{GL}})$ constructed above is a real principal
$\text{GL}(2,{\mathbb C})$--bundle over $X$.

We will show that $(E_{\text{GL}_2}\, , \rho_{\text{GL}})$ is not
semistable. For that, note that for any $x\, \in\, X$, the fiber
$\text{Ad}(E_{\text{GL}_2})_x$ is $\text{GL}(V_x)$
(the space of all $\mathbb C$--linear 
automorphisms of $V_x$). Let
$$
P\, \subset\, \text{Ad}(E_{\text{GL}_2})
$$
be the subgroup-scheme whose fiber over any $x\, \in\, X$ is
the space of all linear automorphisms of $V_x\,=\, L_x\oplus M_x$
that preserve the line $L_x$. It is straightforward to check that
$\widetilde{\rho}(P)\, \subset\, P$, where $\widetilde{\rho}$
is the diffeomorphism in \eqref{e5}. Let $\mathfrak p$ be the
Lie algebra bundle corresponding to $P$. We have
$$
{\mathfrak p}\,=\, \mathcal{E}nd(L)\oplus \mathcal{E}nd(M)\oplus
\mathcal{H}om(M\, ,L)\, \subset\, \mathcal{E}nd(V)\,=\,
\text{ad}(E_{\text{GL}_2})\, .
$$
Therefore,
$$
\text{degree}({\mathfrak p})\,=\, \text{degree}(\mathcal{H}om(M\, ,
L))\,=\, 2\cdot \text{degree}(L)\, >\, 0\, .
$$
Hence $(E_{\text{GL}_2}\, , \rho_{\text{GL}})$ is not
semistable.

On the contrary, $(E_{\text{GL}_2}\, , \rho_{\text{GL}})$ is
r-stable because there is no proper parabolic subgroup of
$\text{GL}(2,{\mathbb C})$ that is preserved by the
involution $\sigma_{\text{GL}(2,{\mathbb C})}$. Indeed, for any
proper parabolic subgroup $Q$ of $\text{GL}(2,{\mathbb C})$,
the intersection $Q\bigcap \sigma_{\text{GL}(2,{\mathbb C})}(Q)$
is isomorphic to ${\mathbb C}^*\times {\mathbb C}^*$.

\begin{lemma}\label{lem1}
Let $(E_G\, ,\rho)$ be a pseudo-real principal $G$--bundle over $X$
satisfying the condition that there is a point $y\, \in\, E_G$
such that $\rho(y)\, =\, y z$ for some element $z$ in the
center $Z$ of $G$ (using the identification of the
total spaces of $\sigma^*\overline{E}_G$ and $E_G$ (see
Remark \ref{rem0}), the element
$\rho(y)$ of $\sigma^*\overline{E}_G$ is considered as
an element of $E_G$). Then $(E_G\, ,\rho)$ is r-semistable
if and only if the principal $G$--bundle $E_G$ is semistable.
\end{lemma}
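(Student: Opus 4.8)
The plan is to prove both implications, of which only one uses the hypothesis on $y$. The implication that $(E_G\, ,\rho)$ is r-semistable whenever $E_G$ is semistable is immediate from the definitions: the r-semistability condition tests only those reductions $E_Q\, \subset\, E_G$ to $\sigma_G$-invariant parabolics $Q$ with $\rho(E_Q)\,=\, E_Q$, a subclass of all reductions tested by Ramanathan's criterion for $E_G$, so semistability of $E_G$ forces the inequality $\text{degree}(E_Q\times^\chi{\mathbb C})\,\geq\, 0$ for every strictly antidominant $\chi$. For the converse I would argue by contrapositive: assuming $E_G$ is not semistable, I will exhibit a $\sigma_G$-invariant parabolic $Q$, a reduction $E_Q$ with $\rho(E_Q)\,=\, E_Q$, and a strictly antidominant character violating the defining inequality of r-semistability. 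First I note that, since the lift $\widetilde{\sigma}$ associated with $\rho$ (see \eqref{sl}) satisfies $\widetilde{\sigma}(y)\,=\, yz$ with $z\,\in\, Z$ preserving fibres, the point $x_0\,\in\, X$ over which $y$ lies must satisfy $\sigma(x_0)\,=\, x_0$; thus $x_0$ is a real point and $y$ frames the fibre there.

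Next I would invoke the canonical (Harder--Narasimhan) reduction $E_P\, \subset\, E_G$ to a proper parabolic $P$, which exists precisely because $E_G$ is not semistable and is unique. Exactly as in the proof of Proposition \ref{prop1} (applied to the adjoint bundle, whose Harder--Narasimhan filtration is carried to that of $\text{ad}(\sigma^*\overline{E}_G)$ by $\rho_{\rm ad}$), uniqueness forces $\widetilde{\rho}(\text{Ad}(E_P))\,=\, \text{Ad}(E_P)$, where $\widetilde{\rho}$ is the involution of \eqref{e5}. Writing elements of $\text{Ad}(E_G)\,=\, E_G\times^G G$ as $[p\, ,g]$, the framing $y$ identifies $\text{Ad}(E_G)_{x_0}$ with $G$ acting by conjugation, and $\text{Ad}(E_P)_{x_0}$ then corresponds to a parabolic subgroup $Q\, \subset\, G$. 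The crucial computation is that the induced action of $\widetilde{\rho}$ on this framed fibre is simply $\sigma_G$: indeed $\widetilde{\rho}([y\, ,g])\,=\, [\widetilde{\sigma}(y)\, ,\sigma_G(g)]\,=\, [yz\, ,\sigma_G(g)]\,=\, [y\, ,\sigma_G(g)]$, the last equality using that $z$ is central. Combined with $\widetilde{\rho}(\text{Ad}(E_P)_{x_0})\,=\, \text{Ad}(E_P)_{x_0}$ this gives $g\,\in\, Q\,\Longleftrightarrow\, \sigma_G(g)\,\in\, Q$, that is, $\sigma_G(Q)\,=\, Q$. This is the step where the hypothesis on $y$ is indispensable, and I expect it to be the main obstacle: without a framing at a fixed point together with the centrality of $z$, the canonical reduction need not descend to a $\sigma_G$-invariant parabolic of $G$.

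With $Q$ in hand I would take the reduction $E_Q\, \subset\, E_G$ to $Q$ whose associated subgroup-scheme is $\text{Ad}(E_Q)\,=\, \text{Ad}(E_P)$; such a reduction exists, and since $Q$ is self-normalising it is the unique reduction to $Q$ with this subgroup-scheme (compare the discussion recalled in the proof of Proposition \ref{prop1}). Because $\widetilde{\sigma}(E_Q)$ is again a reduction to $\sigma_G(Q)\,=\, Q$ whose subgroup-scheme is $\widetilde{\rho}(\text{Ad}(E_Q))\,=\, \text{Ad}(E_P)$, this uniqueness yields $\widetilde{\sigma}(E_Q)\,=\, E_Q$, i.e. $\rho(E_Q)\,=\, E_Q$. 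Finally, $E_Q$ is the canonical reduction expressed through $Q$, so it destabilises $E_G$; by the equivalence of Ramanathan's and Behrend's criteria over ${\mathbb C}$ there is a strictly antidominant character $\chi$ of $Q$ with $\text{degree}(E_Q\times^\chi{\mathbb C})\,<\, 0$ (the same reduction yielding $\text{degree}(\text{ad}(E_Q))\,>\, 0$, as in the proof of Proposition \ref{prop1}). Since $Q$ is $\sigma_G$-invariant and $\rho(E_Q)\,=\, E_Q$, this violates the definition of r-semistability, completing the contrapositive.
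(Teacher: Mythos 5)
Your proof is correct and follows essentially the same route as the paper's: both deduce $\sigma(x_0)=x_0$, transport the $\widetilde{\rho}$--invariance of the Harder--Narasimhan parabolic subgroup-scheme (from the proof of Proposition \ref{prop1}) through the frame $y$ to get $\sigma_G(Q)=Q$ using centrality of $z$, and then conclude $\rho(E_Q)=E_Q$ for the associated reduction. The only difference is cosmetic: you make explicit the uniqueness argument for the reduction and the existence of a strictly antidominant character of negative degree, steps the paper cites to \cite{AB} or leaves implicit.
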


\begin{proof}
If $E_G$ is semistable, then it is obvious
that $(E_G\, ,\rho)$ is r-semistable.

To prove the converse, take a point $y\, \in\, E_G$
such that $\rho(y)\, =\, y z$ with $z\, \in\, Z$. Let $x\,\in\, X$
be the image of $y$. Since $\rho(y)\, =\, y z$, it follows that
$\sigma(x)\,=\, x$. Let
$$
f_y\, :\, G\, \longrightarrow\, \text{Ad}(E_G)_x
$$
be the map that sends any $g\, \in\, G$ to the image of $(y\, ,g)$
in $\text{Ad}(E_G)_x$ (recall that $\text{Ad}(E_G)_x$ is a quotient
of $(E_G)_x\times G$). This map $f_y$ is a holomorphic isomorphism of
groups.

Define $f_{\rho(y)}\,:=\, f_{yz}\, :\, G\, \longrightarrow\, 
\text{Ad}(E_G)_x$ by replacing $y$ with $\rho(y)\, =\, y z$ in the
above construction of $f_y$.
The two isomorphisms $f_y$ and $f_{yz}$ differ by the automorphism
of $G$ produced by the adjoint action of $z$. Since $z$ is in the
center of $G$, we conclude that
\begin{equation}\label{i}
f_y\, =\, f_{yz}\, .
\end{equation}
{}From \eqref{i} it follows that
\begin{equation}\label{ii}
f_y(\sigma_G(g))\,=\, \widetilde{\rho}(f_{\rho(y)}(g))
\,=\, \widetilde{\rho}(f_y(g))
\end{equation}
for all $g\, \in\, G$, where $\widetilde{\rho}$ and $\sigma_G$ are
the maps defined in \eqref{e5} and \eqref{e1} respectively; note
that since $\sigma(x)\,=\, x$, the map $\widetilde{\rho}$ sends
$\text{Ad}(E_G)_x$ to itself.

Assume that $E_G$ is not semistable. Let $P\,\subset\,
\text{Ad}(E_G)$ be the parabolic subgroup-scheme in \eqref{P}
constructed from the Harder--Narasimhan filtration of the vector
bundle $\text{ad}(E_G)$. Let
$$
Q\, :=\, f^{-1}_y(P_x) \, \subset\, G
$$
be the parabolic subgroup, where $f_y$ is the isomorphism
constructed above. Since $\widetilde{\rho}(P)\,=\, P$, from \eqref{ii}
we conclude that
\begin{equation}\label{iii}
\sigma_G(Q)\,=\, Q\, .
\end{equation}

Let $E_Q\, \subset\, E_G$ be the holomorphic reduction of structure 
group of $E_G$
to $Q$ constructed using the pair $(P\, , Q)$ (see the proof of
Lemma 2.11 in \cite{AB} for the construction of $E_Q$).
Since $\widetilde{\rho}(P)\,=\, P$, from \eqref{iii} it follows
immediately that $\rho(E_Q)\,=\, E_Q$. Therefore, the reduction
$E_Q\, \subset\, E_G$ establishes
that $(E_G\, ,\rho)$ is not r-semistable.
\end{proof}

Proposition \ref{prop1} and Lemma \ref{lem1} together give the
following corollary:

\begin{corollary}\label{cor1}
Let $(E_G\, ,\rho)$ be a pseudo-real principal $G$--bundle
on $X$
satisfying the condition that there is a point $y\, \in\, E_G$
such that $\rho(y)\, =\, y z$ for some $z\, \in\, Z$. Then
$(E_G\, ,\rho)$ is semistable if and only if it is r-semistable.
\end{corollary}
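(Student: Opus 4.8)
The plan is to obtain the stated equivalence by routing both notions of semistability through a single intermediate condition, namely the (ordinary) semistability of the underlying holomorphic principal $G$--bundle $E_G$. Both of the required links have already been established in the preceding results, so the proof reduces to concatenating two biconditionals.

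First I would record that Proposition \ref{prop1} yields, with no extra hypothesis on $\rho$, the equivalence between the semistability of the pair $(E_G\, ,\rho)$ and the semistability of $E_G$. Its first assertion gives one implication (if $E_G$ is semistable then so is $(E_G\, ,\rho)$), while its second assertion supplies the reverse (a semistable pseudo-real bundle has semistable underlying bundle). Hence $(E_G\, ,\rho)$ is semistable if and only if $E_G$ is semistable.

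Next I would invoke Lemma \ref{lem1}. Its hypothesis---the existence of a point $y\, \in\, E_G$ with $\rho(y)\, =\, yz$ for some $z\, \in\, Z$---is exactly the hypothesis of the corollary, so the lemma applies verbatim and provides the second link: $(E_G\, ,\rho)$ is r-semistable if and only if $E_G$ is semistable. Chaining this with the equivalence from Proposition \ref{prop1} through the common middle term ``$E_G$ is semistable'' immediately gives that $(E_G\, ,\rho)$ is semistable if and only if it is r-semistable, which is the assertion.

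The proof therefore presents no genuine obstacle of its own, since all the real content sits in the two results being combined. The subtle point worth flagging is the asymmetric role of the hypothesis: Proposition \ref{prop1} needs no assumption on $\rho$, whereas Lemma \ref{lem1} genuinely requires the existence of the fixed-up-to-center point $y$, this being precisely what allows one to transport the Harder--Narasimhan parabolic subgroup-scheme $P\, \subset\, \text{Ad}(E_G)$ down to a $\sigma_G$--invariant parabolic subgroup $Q\, \subset\, G$, and hence to an r-destabilizing reduction $E_Q\, \subset\, E_G$. Consequently the hypothesis of the corollary is used only in establishing the r-semistability link, while the semistability link holds in complete generality.
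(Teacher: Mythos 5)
Your proof is correct and is exactly the paper's argument: the authors state the corollary as an immediate consequence of Proposition \ref{prop1} (semistability of $(E_G\,,\rho)$ is equivalent to semistability of $E_G$, with no hypothesis on $\rho$) chained with Lemma \ref{lem1} (under the fixed-point-up-to-center hypothesis, r-semistability of $(E_G\,,\rho)$ is equivalent to semistability of $E_G$). Your observation about the asymmetric role of the hypothesis also matches the structure of the paper's two cited results.
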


\begin{remark}
{\rm The assumption in Lemma \ref{lem1} and Corollary
\ref{cor1} that $\rho(y)\, =\, y z$ implies that the
image of $y$ in $X$ is fixed by the involution $\sigma$.
Hence this assumption fails if $\sigma$ does not have
a fixed point.}
\end{remark}

\section{Polystable pseudo-real principal bundles and
representations of the fundamental group}

\subsection{Polystable pseudo-real principal bundles}

As before, $G$ is a connected complex reductive group.

Let $E_G$ be a holomorphic principal $G$--bundle over $X$. Let
$P\,\subset\, \text{Ad}(E_G)$ be a proper parabolic subgroup-scheme.
For each point $x\, \in\, X$, the unipotent radical of the fiber
$P_x$ will be denoted by $R_u(P)_x$; it is the unique maximal
normal unipotent subgroup. We have an analytically
locally trivial subgroup-scheme
$$
R_u(P)\, \subset\, P
$$
whose fiber over any $x\, \in\, X$ is $R_u(P)_x$. The quotient
$P/R_u(P)$ is a group-scheme over $X$.

A \textit{Levi subgroup-scheme} of $P$ is an analytically
locally trivial subgroup-scheme
$L(P)\, \subset\, P$ such that the composition
$$
L(P)\, \hookrightarrow\, P\, \longrightarrow\, P/R_u(P)
$$
is an isomorphism. It should be emphasized that a Levi subgroup-scheme
does not exist in general. In vector bundle terms, the existence of a 
Levi subgroup-scheme corresponds to some extension classes being trivial.

\begin{definition}\label{def4}
{\rm A semistable pseudo-real principal $G$--bundle $(E_G\, ,\rho)$
over $X$ is called} polystable {\rm if either $(E_G\, ,\rho)$ is stable,
or there is a proper parabolic subgroup-scheme $P\,\subset\, 
\text{Ad}(E_G)$, and a Levi subgroup-scheme $L(P)\, \subset\, P$,
such that the following
conditions hold:}
\begin{enumerate}
\item {\rm $\widetilde{\rho}(P)\, \subset\, P$ and
$\widetilde{\rho}(L(P))\, \subset\, L(P)$,
where $\widetilde{\rho}$ is constructed in \eqref{e5}, and}

\item for any proper parabolic subgroup-scheme $P'\, \subset\, L(P)$
with $\widetilde{\rho}(P')\, \subset\, P'$, we have
$$
{\rm degree}({\mathfrak p}')\, <\, 0\, ,
$$
where ${\mathfrak p}'$ is the bundle of Lie algebras corresponding
to $P'$.
\end{enumerate}
\end{definition}

It should be clarified that in the above definition, $P'$ is not
a parabolic subgroup-scheme of $\text{Ad}(E_G)$. The condition that
$P'$ is a parabolic subgroup-scheme of $L(P)$ implies that
the quotient $L(P)/P'$ is compact.

\begin{proposition}\label{prop2}
A pseudo-real principal $G$--bundle $(E_G\, ,\rho)$ is polystable
if and only if the principal $G$--bundle $E_G$ is polystable.
\end{proposition}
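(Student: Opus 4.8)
The plan is to prove Proposition~\ref{prop2} by establishing both implications, leaning heavily on the results already proved in Proposition~\ref{prop1} and on the structure theory of the socle filtration recalled in its proof. The strategy is to reduce the statement about polystability of the pair $(E_G\, ,\rho)$ to the corresponding statement about the underlying bundle $E_G$, using the fact that the anti-holomorphic involution $\widetilde{\rho}$ of \eqref{e5} preserves all the canonical filtrations (Harder--Narasimhan, socle) by the rank-and-degree-preservation argument that was the technical heart of Proposition~\ref{prop1}.

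First I would prove the forward direction: if $(E_G\, ,\rho)$ is polystable, then $E_G$ is polystable. If $(E_G\, ,\rho)$ is stable, then by the third part of Proposition~\ref{prop1} the bundle $E_G$ is already polystable, so we are done. Otherwise there is a proper parabolic subgroup-scheme $P$ with a Levi subgroup-scheme $L(P)$ satisfying conditions (1) and (2) of Definition~\ref{def4}. The idea is that $L(P)$ arises from a reduction of structure group $E_{L}$ whose associated bundle is a principal $G$-bundle (after passing to the Levi quotient), and condition (2) says precisely that this reduced object satisfies a stability-type inequality in $L(P)$. One then invokes the characterization of polystability from \cite{AB}: $E_G$ is polystable exactly when it admits a reduction to a Levi whose associated bundle is stable, equivalently when the socle filtration of $\text{ad}(E_G)$ splits in the appropriate sense. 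Since $(E_G\, ,\rho)$ is semistable, Proposition~\ref{prop1} gives that $E_G$ is semistable; combining this with the parabolic-plus-Levi data of Definition~\ref{def4} and forgetting the $\rho$-structure yields exactly the data witnessing polystability of $E_G$ in the sense of \cite{AB}.

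For the converse, suppose $E_G$ is polystable. By Proposition~\ref{prop1} the pair $(E_G\, ,\rho)$ is semistable, so it remains to produce the parabolic and Levi subgroup-schemes of Definition~\ref{def4} that are preserved by $\widetilde{\rho}$. Here I would use the canonical construction from \cite{AB} recalled in the proof of Proposition~\ref{prop1}: the socle filtration of $\text{ad}(E_G)$ produces a canonical pair $(Q\, ,E_Q)$, unique up to the conjugation relation $Q_1\, =\, g^{-1}Qg$, $E_{Q_1}\, =\, E_Q g$, whose associated $L(Q)$-bundle is polystable. The crucial point, already established as equation \eqref{s} and its consequence $\widetilde{\rho}(\text{Ad}(E_Q))\, =\, \text{Ad}(E_Q)$ in the proof of Proposition~\ref{prop1}, is that $\widetilde{\rho}$ preserves the socle and hence preserves this canonically constructed subgroup-scheme $P\, :=\, \text{Ad}(E_Q)$. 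Since $E_G$ is polystable, the associated $L(Q)$-bundle is in fact stable, which translates into the strict-inequality condition (2) of Definition~\ref{def4} for $L(P)$; and the existence of the Levi subgroup-scheme $L(P)$ with $\widetilde{\rho}(L(P))\, \subset\, L(P)$ follows because the socle filtration splits canonically (compatibly with $\widetilde{\rho}$, which preserves each term) when the bundle is polystable.

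The main obstacle I expect is the converse direction, specifically verifying that the Levi subgroup-scheme $L(P)$ can be chosen so that $\widetilde{\rho}(L(P))\, \subset\, L(P)$, and that condition (2) holds. The existence of a Levi subgroup-scheme is not automatic (as the excerpt emphasizes: ``a Levi subgroup-scheme does not exist in general''), so one must argue that polystability of $E_G$ forces the relevant extension classes to vanish, and then that among the possible Levi subgroup-schemes one can be selected compatibly with the involution $\widetilde{\rho}$. The natural approach is to observe that $\widetilde{\rho}$ acts on the (affine) space of Levi subgroup-schemes of $P$ as an involution, and then to use an averaging or fixed-point argument --- exploiting that $\widetilde{\rho}\circ\widetilde{\rho}\, =\, \text{Id}$ --- to produce a $\widetilde{\rho}$-invariant choice; here the reality structure on $Z_{\mathbb R}$ and the normalization of $c$ to order two should guarantee no obstruction arises. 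Once the $\widetilde{\rho}$-invariant Levi is in hand, condition (2) is immediate from the stability of the $L(Q)$-bundle underlying the polystable $E_G$, again via the Harder--Narasimhan argument of Proposition~\ref{prop1} applied inside $L(P)$.
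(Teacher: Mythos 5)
Your converse direction contains the genuine gap. You propose to take $P\,:=\,\text{Ad}(E_Q)$, where $(Q\, ,E_Q)$ is the canonical socle-based pair recalled in the proof of Proposition \ref{prop1}, and to get $\widetilde{\rho}$-invariance from its uniqueness. But that construction is unavailable here: when $E_G$ is polystable, $\text{ad}(E_G)$ is polystable, so it equals its own socle and the socle filtration is trivial; the canonical-reduction machinery of \cite{AB} produces a \emph{proper} parabolic (and owes its uniqueness, hence its $\widetilde{\rho}$-invariance via \eqref{s}) precisely when the bundle is semistable but \emph{not} polystable --- which is exactly how Proposition \ref{prop1} uses it, to derive a contradiction. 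For a polystable non-stable $E_G$ there are many parabolic reductions with polystable Levi quotient and no canonical one, so $\widetilde{\rho}$-invariance cannot be extracted from uniqueness. You do correctly flag the other difficulty --- that a Levi subgroup-scheme $L(P)$ need not exist and must moreover be chosen $\widetilde{\rho}$-invariantly --- but you defer it (``polystability forces the extension classes to vanish,'' then an averaging argument), and this deferred step is the actual content of the paper's proof. The paper starts from an \emph{arbitrary} $\widetilde{\rho}$-invariant proper parabolic subgroup-scheme $P$ with ${\rm degree}({\mathfrak p})\,=\,0$, chosen of \emph{minimal rank}; it then uses the Einstein--Hermitian connection of the polystable $E_G$ to show $\nabla$ preserves $\mathfrak p$ (vanishing of the second fundamental form, \cite{Ko}), builds a Hermitian structure on $\text{ad}(E_G)$ from a maximal compact $K$ with $\sigma_G(K)\,=\,K$ and a K\"ahler form with $\sigma^*\omega\,=\,-\omega$, defines $L({\mathfrak p})$ as the orthogonal complement of the nilpotent radical $R_n({\mathfrak p})$ inside $\mathfrak p$, shows it is $\nabla$-parallel hence holomorphic, and exponentiates it to get $L(P)$. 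The invariance $\widetilde{\rho}(L(P))\,=\,L(P)$ comes from these equivariant choices of $K$ and $\omega$ (not from averaging), and condition (2) of Definition \ref{def4} comes from the minimality of ${\rm rank}({\mathfrak p})$ --- not, as you assert, from ``stability of the $L(Q)$-bundle,'' a statement which in any case does not follow for the object you constructed.

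Your forward direction also has a gap, though a smaller one: condition (2) of Definition \ref{def4} bounds degrees only for $\widetilde{\rho}$-\emph{invariant} parabolic subgroup-schemes $P'\,\subset\,L(P)$, so ``forgetting the $\rho$-structure'' does not hand you data witnessing polystability of $E_G$ in the sense of \cite{AB}. The paper bridges this by constructing the reduction $E_{L(Q)}$ explicitly, re-running the argument of the third part of Proposition \ref{prop1} for $E_{L(Q)}$ (only $\widetilde{\rho}$, not $\rho$, is needed there) to conclude that $\text{ad}(E_{L(Q)})$, and hence $E_{L(Q)}$, is polystable, and then decomposing $\text{ad}(E_G)\,=\,\bigoplus_{i} E_{V_i}$ into isotypical pieces for $Z(L(Q))$, where each $E_{V_i}$ is polystable by \cite{RR}; combined with semistability of $\text{ad}(E_G)$ this yields polystability of $\text{ad}(E_G)$ and hence of $E_G$. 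Without this chain, the passage from the invariant-only inequality (2) to genuine polystability of $E_G$ is unproven.
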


\begin{proof}
First assume that the pseudo-real principal $G$--bundle
$(E_G\, ,\rho)$ is polystable. We will show that $E_G$
is polystable.

We begin by constructing a reduction of $E_G$ to a suitable parabolic 
subgroup $Q$.
Fix a point $x_0\, \in\, X$, and also fix a point
$z_0\, \in\, (E_G)_{x_0}$ in the fiber over $x_0$. Let
\begin{equation}\label{phi}
\phi\, :\, G\, \longrightarrow\, \text{Ad}(E_G)_{x_0}
\end{equation}
be the map that sends any $g\, \in\, G$ to the image
of $(z\, ,g)$ in $\text{Ad}(E_G)_{x_0}$; recall that
$\text{Ad}(E_G)_{x_0}$ is a quotient of $(E_G)_{x_0}\times G$.
This map $\phi$ is a holomorphic isomorphism of groups.

If $(E_G\, ,\rho)$ is stable, then $E_G$ is polystable by
Proposition \ref{prop1}. Assume that $(E_G\, ,\rho)$ is not stable.
Take $P$ and $L(P)$ as in Definition \ref{def4}. Let
$$
Q\, :=\, \phi^{-1}(P_{x_0})\, \subset\, G
$$
be the parabolic subgroup. The conjugacy class of the subgroup
$Q$ is independent of the choices of $x_0$ and $z_0$ (meaning
subgroups are conjugate by some element of $G$).
We will first show that $E_G$ admits a natural
holomorphic reduction of structure group to $Q$.

For any $x\, \in\, X$, let
$$
(E_Q)_x\, \in\, (E_G)_x
$$
be the submanifold consisting of all points $z$ such that for all
$q\, \in\, Q$, the image of $(z\, ,q)\, \in\, (E_G)_x\times Q$ in 
$\text{Ad}(E_G)_x$ lies in the subgroup $P_x$ (recall that 
$\text{Ad}(E_G)_x$ is a quotient of $(E_G)_x\times Q$). These
$(E_Q)_x$, $x\, \in\, X$, together form a 
holomorphic sub-fiber bundle $E_Q\, \subset\, E_G$. In fact,
$E_Q$ is a holomorphic reduction of structure group of the
principal $G$--bundle $E_G$ to the subgroup $Q$. This follows from
the fact that the normalizer of $Q$ in $G$ coincides with $Q$.
It should be
mentioned that $\rho(E_Q)$ need not coincide with $E_Q$
(we consider $\rho(E_Q)$ as a submanifold of the total space
of $E_G$ using the identification of the total spaces
of $E_G$ and $\sigma^*\overline{E}_G$ described in Remark \ref{rem0}).

Consider $L(P)\, \subset\, P$ as in Definition
\ref{def4}. Define
\begin{equation}\label{lqf}
L(Q)\, :=\, \phi^{-1}(L(P)_{x_0})\, \subset\, Q\, ,
\end{equation}
where $\phi$ is constructed in \eqref{phi}. It is a Levi
subgroup of $Q$, meaning
$L(Q)$ is a connected reductive subgroup of $Q$ such that
the composition
$$
L(Q)\, \hookrightarrow\, Q\, \longrightarrow\, Q/R_u(Q)
$$
is an isomorphism, where $R_u(Q)$ is the unipotent radical of
$Q$. Any two Levi subgroups of $Q$ are conjugate by some
element of $Q$. We will show that $L(P)$ produces a
holomorphic reduction of structure group of the principal
$Q$--bundle $E_Q$ to the subgroup $L(Q)\, \subset\, Q$
defined in \eqref{lqf}.

Let $Z(L(Q))\, \subset\, L(Q)$ be the connected component of
the center of $L(Q)$ containing the identity element. It is a product
of copies of ${\mathbb C}^*$ because $L(Q)$ is reductive. Let
$$
Z(L(P))\,\subset\, L(P)
$$
be the fiber-wise connected component of the center
containing the identity element, meaning the fiber
$Z(L(P))_x$ for any $x\, \in\, X$ is the connected component,
containing the identity element, of the center of $L(P)_x$.
So $Z(L(P))_x$ is isomorphic to $Z(L(Q))$.

We recall that for any $x\, \in\, X$, the fiber $\text{Ad}(E_Q)_x$
is the group of all $Q$--equivariant self-maps of $(E_Q)_x$. Let
\begin{equation}\label{cs}
{\mathcal S}\, \subset\, E_Q
\end{equation}
be the subset consisting of all $(x\, ,z)$, $x\, \in\, X$ and
$z\, \in\, (E_Q)_x$, such that
$$
\phi(t)(z)\,=\, zt
$$
for all $t\, \in\, Z(L(Q))$,
where $\phi$ is constructed in \eqref{phi}. This ${\mathcal S}$
is a holomorphic reduction of structure group of the
principal $Q$--bundle $E_Q$ to the subgroup $L(Q)$.
It should be emphasized that $\rho({\mathcal S})$ need not
coincide with ${\mathcal S}$. Let
\begin{equation}\label{elq}
E_{L(Q)}\, \subset\, E_Q
\end{equation}
be the principal $L(Q)$--bundle
defined by ${\mathcal S}$ in \eqref{cs}. We note that the
subgroup-scheme
$\text{Ad}(E_{L(Q)})\, \subset\, \text{Ad}(E_Q)$ is identified with 
$L(P)$. Indeed, this
follows immediately from the above construction of $\mathcal S$.

We recall from Definition \ref{def4} that
for any proper parabolic subgroup-scheme $P'\, \subset\, L(P)$
with $\widetilde{\rho}(P')\, \subset\, P'$,
\begin{equation}\label{inq}
{\rm degree}({\mathfrak p}')\, <\, 0\, ,
\end{equation}
where ${\mathfrak p}'$ is the bundle of Lie algebras corresponding
to $P'$. In the proof of Proposition \ref{prop1} we saw that
the vector bundle $\text{ad}(E_G)$ is polystable if $(E_G\, ,
\rho)$ is stable. Repeating verbatim this argument for $E_{L(Q)}$ 
(defined in \eqref{elq}), and using \eqref{inq}, we conclude that the
adjoint vector
bundle $\text{ad}(E_{L(Q)})$ is polystable. Note that in the proof
that $\text{ad}(E_G)$ is polystable if $(E_G\, ,
\rho)$ is stable, the involution $\rho$ is not used; only
$\widetilde\rho$ is used.

We also note that $E_{L(Q)}$ is polystable because
$\text{ad}(E_{L(Q)})$ is polystable \cite[page 224,
Corollary 3.8]{AB}.

Consider the adjoint action of $Z(L(Q))$ on ${\mathfrak g}\, :=\, 
\text{Lie}(G)$. Let
\begin{equation}\label{d}
{\mathfrak g}\, =\, \bigoplus_{i=1}^n V_i
\end{equation}
be the isotypical decomposition of the $Z(L(Q))$--module
${\mathfrak g}$. Note that each subspace $V_i\,\subset\,
\mathfrak g$ is a preserved by the adjoint action of
$L(Q)$. For any $i\, \in\, [1\, ,n]$, let
$$
E_{V_i}\, :=\, E_{L(Q)}\times^{L(Q)} V_i\, \longrightarrow\, X
$$
be the holomorphic vector bundle associated to the principal
$L(Q)$--bundle $E_{L(Q)}$ (constructed in \eqref{elq}) for the 
$L(Q)$--module $V_i$. Since
$E_{L(Q)}$ is polystable, the vector bundle $E_{V_i}$ is polystable
\cite[page 285, Theorem 3.18]{RR} (this theorem of \cite{RR} applies
because $Z(L(Q))$ acts on $V_i$ through a character).

Since $(E_G\, ,\rho)$ is semistable (see Definition \ref{def4}), from
the second part of Proposition \ref{prop1} we know that $E_G$ is 
semistable. Hence the adjoint vector bundle $\text{ad}(E_G)$ is
semistable \cite[page 214, Proposition 2.10]{AB}. From \eqref{d}
we have a decomposition
$$
\text{ad}(E_G)\, =\, \bigoplus_{i=1}^n E_{V_i}\, .
$$
Since $\text{ad}(E_G)$ is semistable, and each $E_{V_i}$ is
polystable, we conclude that $\text{ad}(E_G)$ is polystable.
Hence $E_G$ is polystable \cite[page 224,
Corollary 3.8]{AB}.

To prove the converse, assume that the principal $G$--bundle
$E_G$ is polystable. We will prove that $(E_G\, ,\rho)$ is
polystable.

The adjoint vector bundle $\text{ad}(E_G)$ is polystable
because $E_G$ is polystable \cite[page 224, Corollary 3.8]{AB}.
Therefore, $\text{ad}(E_G)$ is semistable. Hence for
every proper parabolic subgroup-scheme $P\,\subset\,
\text{Ad}(E_G)$ such that $\widetilde{\rho}(P)\, \subset\, P$,
where $\widetilde{\rho}$ is constructed in \eqref{e5}, we have
$$
{\rm degree}({\mathfrak p})\, \leq\, 0
$$
(${\mathfrak p}$ is the bundle of Lie algebras associated to $P$).
If ${\rm degree}({\mathfrak p})\, <\, 0$ for every such $P$,
then $(E_G\, ,\rho)$ is stable (see Definition \ref{def3}), in 
particular, it is polystable in that case.

Assume that
\begin{equation}\label{e}
{\rm degree}({\mathfrak p})\, =\, 0
\end{equation}
for a proper parabolic subgroup-scheme $P\,\subset\,
\text{Ad}(E_G)$ with $\widetilde{\rho}(P)\, \subset\, P$.
We also assume that
the rank of the vector bundle ${\mathfrak p}$ is
smallest among all bundles of Lie algebras ${\mathfrak p}''$
satisfying the conditions
that ${\rm degree}({\mathfrak p}'')\, =\, 0$ and
${\mathfrak p}''$ corresponds to a $\widetilde{\sigma}$--invariant
proper parabolic subgroup-scheme of $\text{Ad}(E_G)$.

Since $E_G$ is polystable, it has an Einstein--Hermitian connection
\cite{Ra}, \cite[page 208, Theorem 0.1]{AB}.
Let $\nabla$ be the connection on $\text{ad}(E_G)$ induced by
an Einstein--Hermitian connection on $E_G$. We need to choose
a K\"ahler form on $X$ and a maximal compact subgroup of $G$ in order
to define an Einstein--Hermitian connection on $E_G$. But the induced 
connection on $\text{ad}(E_G)$
is independent of the choices of K\"ahler form on 
$X$ and maximal compact subgroup of $G$. The connection $\nabla$
on $\text{ad}(E_G)$ is
flat unitary. Note that $\text{degree}(\text{ad}(E_G))\,=\, 0$
(see \eqref{da}).

{}From \eqref{e} it follows immediately that the second fundamental
form of the subbundle ${\mathfrak p}\, \subset\, \text{ad}(E_G)$
for the connection $\nabla$
vanishes identically \cite[page 139]{Ko}. Therefore,
the connection $\nabla$ on $\text{ad}(E_G)$ preserves the subbundle 
$\mathfrak p$.

Fix a maximal compact subgroup
\begin{equation}\label{K}
K\, \subset\, G
\end{equation}
such that $\sigma_G(K)\, =\, K$, where $\sigma_G$ is the involution
in \eqref{e1}. To see that such a subgroup exists, let
$$
\widehat{G}\, :=\, G\rtimes ({\mathbb Z}/2{\mathbb Z})
$$
be the semi-direct product for the involution $\sigma_G$ in
\eqref{e1}. Let $\widehat{K}\, \subset\, \widehat{G}$ be
a maximal compact subgroup. Then, the intersection
$\widehat{K}\bigcap G$ is a maximal compact subgroup of $G$
which is preserved by $\sigma_G$.

Fix a K\"ahler form $\omega$ on $X$ such that
$\sigma^*\omega\,=\, -\omega$, where $\sigma$ is the involution
in \eqref{e4}; such a K\"ahler form is constructed by averaging,
with respect to $\sigma$, a Hermitian structure on $TX$. Let
$$
E_K\, \subset\, E_G
$$
be a $C^\infty$ reduction of structure group of the principal
$G$--bundle $E_G$ giving the Einstein--Hermitian connection on $E_G$.

Fix an inner product $h$ on $\mathfrak g$ such that $h$ is
preserved by the adjoint action of $K$ on $\mathfrak g$ (we note that
such an inner product exists because $K$ is compact).
Since $\text{ad}(E_G)$ is identified with the vector bundle
associated to the principal $K$--bundle $E_K$ for the
adjoint action of $K$ on $\mathfrak g$, from the fact that $h$ is
preserved by the adjoint action of $K$ it follows
immediately that the inner product $h$
on $\mathfrak g$ produces a Hermitian structure
on $\text{ad}(E_G)$. This Hermitian structure
on $\text{ad}(E_G)$ will be denoted by $\widehat{h}$.

Since the connection $\nabla$ on $\text{ad}(E_G)$ is induced by
the Einstein--Hermitian connection on $E_G$ given by $E_K$, it
follows immediately that $\nabla$ coincides with the Chern
connection on $\text{ad}(E_G)$ associated to the Hermitian
structure $\widehat{h}$ on it.

Let
$$
R_n({\mathfrak p})\, \subset\, {\mathfrak p}
$$
be the subbundle defined by the fiberwise nilpotent radicals; in
other words, for each point $x\, \in\, X$, the fiber
$R_n({\mathfrak p})_x$ is the nilpotent radical of the 
parabolic subalgebra ${\mathfrak p}_x$. We note that
$R_n({\mathfrak p})$ is a holomorphic subbundle because the
section of ${\mathcal H}om(\bigwedge^2{\mathfrak p}^*\, ,
{\mathfrak p})$ defined by the Lie algebra structure on the fibers
of $\mathfrak p$ is holomorphic. Let
\begin{equation}
L({\mathfrak p})\, :=\, R_n({\mathfrak p})^\perp \bigcap {\mathfrak p}
\,\subset\, {\mathfrak p}
\end{equation}
be the orthogonal complement, with respect to the Hermitian structure
$\widehat{h}$ constructed above, of $L({\mathfrak p})$ inside
${\mathfrak p}$. We will show that this
$C^\infty$ subbundle $L({\mathfrak p})$
is preserved by the connection $\nabla$.

For each point $x\, \in\, X$, the fiber $L({\mathfrak p})_x$ is
closed under the Lie bracket operation on ${\mathfrak p}_x$; in fact,
$L({\mathfrak p})_x$ is a Levi subalgebra of ${\mathfrak p}_x$, meaning
it is a maximal reductive subalgebra and the composition
$$
L({\mathfrak p})_x\, \hookrightarrow\, {\mathfrak p}_x
\, \longrightarrow\, {\mathfrak p}_x/R_n({\mathfrak p})_x
$$
is an isomorphism. All these follow from the
fact that the inner product $h$ on $\mathfrak g$ is preserved
by the adjoint action of $K$. Any
$G$--invariant nondegenerate symmetric bilinear form on
$\mathfrak g$ produces a holomorphic
nondegenerate symmetric bilinear form on
$\text{ad}(E_G)$; the restriction to $L({\mathfrak p})$ of this
holomorphic bilinear form on $\text{ad}(E_G)$ is nondegenerate. 
Therefore, the $C^\infty$ vector bundle $L({\mathfrak p})$ is
isomorphic to its dual $L({\mathfrak p})^*$.

We have $\text{degree}(L({\mathfrak p}))\, =\, 0$ because
$L({\mathfrak p})^*$ is isomorphic to $L({\mathfrak p})$.
Therefore, from \eqref{e} we conclude that
$$
\text{degree}(R_n({\mathfrak p}))\, =\, 0
$$
(recall that ${\mathfrak p}\,=\, R_n({\mathfrak p})\bigoplus
L({\mathfrak p})$).
This implies that the unitary flat connection $\nabla$ 
preserves the holomorphic subbundle $R_n({\mathfrak p})$
of $\text{ad}(E_G)$ \cite[page 139]{Ko}. We already proved
that ${\mathfrak p}$ is preserved by $\nabla$. Consequently, the 
orthogonal complement of $R_n({\mathfrak p})$ in ${\mathfrak p}$,
namely $L({\mathfrak p})$, is preserved by $\nabla$. In particular,
$L({\mathfrak p})$ is a holomorphic subbundle of ${\mathfrak p}$.

Using the exponential map, the bundle of subalgebras
$L({\mathfrak p})\, \subset\, \mathfrak p$ produces a
Levi subgroup-scheme $L(P)\, \subset\, P$. More precisely,
$L(P)$ is the unique subgroup-scheme of $P$ such that the
corresponding bundle of Lie subalgebras coincides with
$L({\mathfrak p})$.

Since $\sigma_G(K)\, =\, K$, and $\sigma^*\omega\,=\, -\omega$, it
follows that $\widetilde{\rho}(L(P))\, =\, L(P)$. Since
$\mathfrak p$ is of smallest rank among all
Lie algebra bundles of degree zero associated to 
proper parabolic subgroup-schemes of $\text{Ad}(E_G)$
preserved by $\widetilde{\rho}$,
we conclude that for any proper parabolic subgroup-scheme $P'\, 
\subset\, L(P)$
with $\widetilde{\rho}(P')\, \subset\, P'$, the inequality
$$
{\rm degree}({\mathfrak p}')\, <\, 0
$$
holds, where ${\mathfrak p}'$ is the bundle of Lie algebras
corresponding to $P'$. Hence $(E_G\, ,\rho)$ is polystable.
This completes the proof of the proposition.
\end{proof}

The following is an analog of Corollary \ref{cor-n}.

\begin{corollary}\label{cor-n2}
A pseudo-real principal $G$--bundle $(E_G\, ,\rho)$ over $X$ is
polystable if and only if the corresponding adjoint real vector
bundle $({\rm ad}(E_G)\, ,\rho_{\rm ad})$ is polystable.
\end{corollary}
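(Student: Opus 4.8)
The plan is to prove Corollary \ref{cor-n2} by following the exact template of Corollary \ref{cor-n}, replacing ``semistable'' by ``polystable'' throughout and using Proposition \ref{prop2} in place of Proposition \ref{prop1}. The statement to establish is the chain of equivalences: $(E_G\, ,\rho)$ is polystable $\Longleftrightarrow$ $E_G$ is polystable $\Longleftrightarrow$ $\text{ad}(E_G)$ is polystable $\Longleftrightarrow$ $(\text{ad}(E_G)\, ,\rho_{\rm ad})$ is polystable.

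First I would invoke Proposition \ref{prop2}, which gives directly that the pseudo-real principal $G$--bundle $(E_G\, ,\rho)$ is polystable if and only if the underlying principal $G$--bundle $E_G$ is polystable. This disposes of the first equivalence immediately. Next, I would cite the equivalence between polystability of a principal $G$--bundle and polystability of its adjoint vector bundle, namely that $E_G$ is polystable if and only if $\text{ad}(E_G)$ is polystable; this is \cite[page 224, Corollary 3.8]{AB}, already used repeatedly in the proof of Proposition \ref{prop1} and Proposition \ref{prop2}.

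The only remaining point is the last equivalence, that $\text{ad}(E_G)$ is polystable if and only if the pair $(\text{ad}(E_G)\, ,\rho_{\rm ad})$ is polystable as a real vector bundle. Here the key observation, exactly as in Corollary \ref{cor-n}, is that $\rho_{\rm ad}$ (constructed in \eqref{ad}) is an involution, because the central element $c$ in Definition \ref{def1} acts trivially on $\mathfrak g$. A real structure given by an involution does not alter the set of holomorphic subbundles or their degrees in a way that affects the polystability criterion: the defining decomposition of a polystable bundle as a direct sum of stable bundles of the same slope is preserved under the $\rho_{\rm ad}$--action, so polystability of the underlying bundle $\text{ad}(E_G)$ is equivalent to polystability of the real object. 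I expect this last step to be the only place requiring a genuine (though brief) argument, but since the analogous fact for semistability is asserted without proof in Corollary \ref{cor-n}, I would treat it the same way and simply observe that because $\rho_{\rm ad}$ is an involution, $\text{ad}(E_G)$ is polystable if and only if $(\text{ad}(E_G)\, ,\rho_{\rm ad})$ is polystable. Assembling the three equivalences completes the proof.
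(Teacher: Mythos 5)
Your first two equivalences match the paper exactly: $(E_G\,,\rho)$ polystable $\Leftrightarrow$ $E_G$ polystable is Proposition \ref{prop2}, and $E_G$ polystable $\Leftrightarrow$ ${\rm ad}(E_G)$ polystable is \cite[page 224, Corollary 3.8]{AB}. The gap is in your third step. You assert that, because $\rho_{\rm ad}$ is an involution, ${\rm ad}(E_G)$ is polystable if and only if $({\rm ad}(E_G)\,,\rho_{\rm ad})$ is, proposing to treat it the same way as the parenthetical remark in Corollary \ref{cor-n}. That analogy fails. For semistability the involution remark genuinely suffices: if ${\rm ad}(E_G)$ is unstable, the relevant piece of its Harder--Narasimhan filtration is \emph{canonical}, hence automatically carried to itself by $\rho_{\rm ad}$, and so yields an invariant destabilizing parabolic; the other direction is just that invariant parabolics form a subset of all parabolics. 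Polystability in the pseudo-real sense (Definition \ref{def4}) is of a different nature: one must exhibit a $\widetilde{\rho}$--invariant parabolic subgroup-scheme \emph{together with} a $\widetilde{\rho}$--invariant Levi subgroup-scheme satisfying a negativity condition on all invariant sub-parabolics, and conversely one must extract polystability of the underlying bundle from such invariant data. Neither direction follows from canonicity of a decomposition. Your key sentence --- that ``the defining decomposition of a polystable bundle as a direct sum of stable bundles of the same slope is preserved under the $\rho_{\rm ad}$--action'' --- is false in general: the decomposition into stable summands is not unique (only the isotypical decomposition is canonical), and $\rho_{\rm ad}$ can permute the summands or even act without fixing any. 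For instance, on $W\oplus W$ (two copies of a stable bundle $W$ of slope zero) the stable subbundles isomorphic to $W$ form a ${\mathbb P}^1$, and an anti-holomorphic involution can act on this ${\mathbb P}^1$ freely (antipodally), so that no invariant stable summand exists. In the converse direction, a pseudo-real \emph{stable} object need not have stable underlying bundle (Proposition \ref{prop1} only yields polystable), so that direction is not a matter of ``fewer subbundles to check'' either.

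The fix is immediate, and it is exactly what the paper does: the third equivalence is itself an instance of Proposition \ref{prop2}, applied to the pseudo-real principal ${\rm GL}({\mathfrak g})$--bundle determined by $({\rm ad}(E_G)\,,\rho_{\rm ad})$; the nontrivial content (production of the invariant Levi, via the Einstein--Hermitian and socle arguments) is carried by the proof of that proposition, not by the involution property. So the correct one-line proof cites Proposition \ref{prop2} twice --- once for $G$ and once for the adjoint vector bundle --- with \cite[page 224, Corollary 3.8]{AB} bridging the two.
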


\begin{proof}
A pseudo-real principal $G$--bundle
$(E_G\, ,\rho)$ is polystable if and only if the
principal $G$--bundle $E_G$ is polystable
(Proposition \ref{prop2}), and $E_G$ is polystable
if and only if ${\rm ad}(E_G)$ is polystable
\cite[page 224, Corollary 3.8]{AB}. But ${\rm ad}(E_G)$ is
polystable if and only if $({\rm ad}(E_G)
\, ,\rho_{\rm ad})$ is polystable (Proposition \ref{prop2}).
\end{proof}

\subsection{Homomorphisms from fundamental group to a maximal
compact subgroup}

In this subsection we assume that the group $G$ is semisimple.

As in \eqref{K}, fix a maximal compact subgroup
$$
K\, \subset\, G
$$
such that $\sigma_G(K)\,=\, K$. Let
\begin{equation}\label{hK}
\widehat{K}\, :=\, K\rtimes ({\mathbb Z}/2{\mathbb Z})
\end{equation}
be the semi-direct product for the involution $\sigma_G\vert_K$.
The set $\widehat{K}$ is $K\times ({\mathbb Z}/2{\mathbb Z})$;
the group structure is defined using $\sigma_G\vert_K$.

Fix a point $x\, \in\, X$ such that $\sigma(x)\, \not=\, x$.
Let $\Gamma_1$ denote the homotopy classes of paths on $X$ from
$x$ to $\sigma(x)$. We have a group structure on
\begin{equation}\label{Ga}
\Gamma\, :=\, \pi_1(X,x) \cup \Gamma_1
\end{equation}
defined as follows: for $\gamma_1\, ,\gamma_2\, \in\, \Gamma$,
\begin{itemize}
\item if $\gamma_2\, \in\, \pi_1(X,x)$, then $\gamma_2\gamma_1$
is simply the composition of paths $\gamma_1\circ\gamma_2$, and

\item if $\gamma_2\, \in\, \Gamma_1$, then $\gamma_2\gamma_1$
is the composition $\sigma(\gamma_1)\circ \gamma_2$ of paths.
\end{itemize}
(See \cite{BHH} for more details.)

Composition of paths will be denoted by ``$\circ$''.

The group $\widehat{K}$
in \eqref{hK} and the group $\Gamma$ defined above fit in
the exact sequences
$$
0\, \longrightarrow\, K\, \longrightarrow\, \widehat{K}\, 
\longrightarrow\, {\mathbb Z}/2{\mathbb Z} \, \longrightarrow\, 0
$$
and
$$
0\, \longrightarrow\, \pi_1(X,x)\, \longrightarrow\, \Gamma\,
\longrightarrow\, {\mathbb Z}/2{\mathbb Z} \, \longrightarrow\,
0\, .
$$
Let ${\rm Hom}(\Gamma\, , \widehat{K})$ be the space of all
homomorphisms from $\Gamma$ to the group $\widehat{K}$. Let
$$
{\rm Hom}'(\Gamma\, , \widehat{K})\, \subset\,
{\rm Hom}(\Gamma\, , \widehat{K})
$$
be the subset consisting of all homomorphisms $\varphi$ such that
we have commutative diagram
$$
\begin{matrix}
0 & \longrightarrow & \pi_1(X,x)& \longrightarrow & \Gamma &
\longrightarrow & {\mathbb Z}/2{\mathbb Z} &
\longrightarrow & 0\\
&& \Big\downarrow && ~ \, \Big\downarrow \varphi && \Vert \\
0 & \longrightarrow & K & \longrightarrow & \widehat{K} &
\longrightarrow & {\mathbb Z}/2{\mathbb Z} &
\longrightarrow & 0
\end{matrix}
$$
The normal subgroup $K$ of $\widehat{K}$ has a conjugation action
on ${\rm Hom}'(\Gamma\, , \widehat{K})$.

\begin{theorem}\label{thm1}
Let $G$ be a connected semisimple complex affine algebraic group.
Isomorphism classes of polystable real principal $G$--bundles
on $X$ (see Definition \ref{def2}) are in bijective
correspondence with the quotient ${\rm Hom}'(\Gamma\, , 
\widehat{K})/K$.
\end{theorem}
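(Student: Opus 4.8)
The plan is to reduce the statement to the Narasimhan--Seshadri--Ramanathan correspondence between polystable $G$--bundles and unitary representations of $\pi_1(X,x)$, and then to show that the real structure $\rho$ (equivalently, the anti-holomorphic lift $\widetilde\sigma$ of \eqref{sl} with $c\,=\,e$) is exactly the extra datum encoded by enlarging $\pi_1(X,x)$ to $\Gamma$ and $K$ to $\widehat K$. I would start from a polystable real principal $G$--bundle $(E_G\, ,\rho)$. By Proposition \ref{prop0} the underlying bundle $E_G$ is polystable, so by \cite{Ra}, \cite[page 208, Theorem 0.1]{AB} it carries an Einstein--Hermitian connection; since $G$ is semisimple this connection is flat and is given by a $C^\infty$ reduction of structure group $E_K\,\subset\, E_G$ to the maximal compact subgroup $K$ fixed in \eqref{K}. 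Fixing a point $\widetilde x\,\in\, (E_K)_x$, the holonomy of this flat connection defines a homomorphism $\pi_1(X,x)\,\longrightarrow\, K$.

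The first key step is to show that $\widetilde\sigma$ carries $E_K$ to itself. Since $\sigma^*\omega\,=\,-\omega$ and $\sigma_G(K)\,=\,K$, conjugating the flat Einstein--Hermitian connection on $E_G$ by $\widetilde\sigma$ produces another Einstein--Hermitian connection on $E_G$; the uniqueness of such a connection on a polystable bundle then forces $\widetilde\sigma$ to preserve the reduction $E_K$. Hence $\widetilde\sigma$ restricts to an anti-holomorphic map of $E_K$ covering $\sigma$, satisfying $\widetilde\sigma(z\cdot k)\,=\,\widetilde\sigma(z)\cdot\sigma_G(k)$ for $k\,\in\, K$, and with $\widetilde\sigma\circ\widetilde\sigma\,=\,\text{Id}$ because $c\,=\,e$. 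As $\widetilde\sigma$ maps $(E_K)_x$ to $(E_K)_{\sigma(x)}$ and back, I extend the holonomy homomorphism to all of $\Gamma$: for a class $\gamma\,\in\,\Gamma_1$ of paths from $x$ to $\sigma(x)$, parallel transport of $\widetilde x$ along $\gamma$ lands in $(E_K)_{\sigma(x)}$, and applying $\widetilde\sigma$ returns a point $\widetilde x\cdot k$ of $(E_K)_x$; one sets $\varphi(\gamma)\,=\,(k\, ,1)\,\in\,\widehat K$, while classes in $\pi_1(X,x)$ are sent into $K\times\{0\}$. The $\sigma$--twisting of the group law of $\Gamma$ (see \eqref{Ga}) matches the semidirect product structure of $\widehat K$ precisely because of the intertwining relation $\widetilde\sigma(z\cdot k)\,=\,\widetilde\sigma(z)\cdot\sigma_G(k)$, so $\varphi$ is a homomorphism; it lies in ${\rm Hom}'(\Gamma\, ,\widehat K)$ since $\Gamma_1$ is sent into the nontrivial coset, and $\widetilde\sigma^2\,=\,\text{Id}$ guarantees consistency on squares of elements of $\Gamma_1$. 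Replacing $\widetilde x$ by $\widetilde x\cdot k_0$ conjugates $\varphi$ by $(k_0\, ,0)$, so the class of $\varphi$ in ${\rm Hom}'(\Gamma\, ,\widehat K)/K$ is a well-defined invariant of $(E_G\, ,\rho)$.

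For the converse I reverse this construction. Given $\varphi\,\in\,{\rm Hom}'(\Gamma\, ,\widehat K)$, the restriction $\varphi\vert_{\pi_1(X,x)}$ takes values in $K$ and hence determines a flat principal $K$--bundle $E_K$ on $X$, whose extension of structure group to $G$ is a polystable $G$--bundle $E_G$ by \cite{Ra}. The values of $\varphi$ on $\Gamma_1$, lying in the nontrivial coset, assemble via the twist $\sigma_G$ into an anti-holomorphic lift $\widetilde\sigma$ of $\sigma$ to $E_K$, and hence to $E_G$, with $\widetilde\sigma(z\cdot k)\,=\,\widetilde\sigma(z)\cdot\sigma_G(k)$; the commutativity of the defining diagram of ${\rm Hom}'$ together with $\gamma^2\,\in\,\pi_1(X,x)$ for $\gamma\,\in\,\Gamma_1$ forces $\widetilde\sigma\circ\widetilde\sigma\,=\,\text{Id}$, i.e. $c\,=\,e$, so $\widetilde\sigma$ defines a \emph{real} structure $\rho$ on $E_G$, which is polystable as a pseudo-real bundle by Proposition \ref{prop0}. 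Conjugate $\varphi$'s yield isomorphic real bundles, and the two constructions are mutually inverse on isomorphism classes.

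The main obstacle is the first key step: showing that the lift $\widetilde\sigma$ preserves the canonical unitary reduction $E_K$. This rests on the uniqueness of the Einstein--Hermitian connection on a polystable bundle together with the compatibilities $\sigma^*\omega\,=\,-\omega$ and $\sigma_G(K)\,=\,K$, which ensure that $\widetilde\sigma$ sends Einstein--Hermitian connections to Einstein--Hermitian connections rather than merely to flat ones; without this the holonomy data would not descend to a homomorphism into $\widehat K$. A secondary point requiring care is the bookkeeping matching the $\sigma$--twisted multiplication of $\Gamma$ with the semidirect product law of $\widehat K$, and in particular the verification that the real condition $c\,=\,e$ corresponds exactly to the involutivity $\widetilde\sigma^2\,=\,\text{Id}$.
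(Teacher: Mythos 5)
Your overall strategy is the same as the paper's: both directions are realized through the holonomy of the flat Einstein--Hermitian connection, with $\varphi$ on $\Gamma_1$ defined by parallel transport followed by the real structure, and the converse built by transporting $\varphi(\gamma)$ along paths. The gap is precisely at the step you yourself flag as the main obstacle: your justification that $\widetilde\sigma$ preserves the unitary reduction $E_K$ does not work. For a polystable (not necessarily stable) bundle, uniqueness holds for the Einstein--Hermitian reduction only \emph{up to the action of} ${\rm Aut}(E_G)$, and, more fatally, the flat connection does not determine the reduction. Concretely, for the trivial ${\rm SL}(2,{\mathbb C})$--bundle every translate $X\times g\,{\rm SU}(2)$, $g\,\in\, {\rm SL}(2,{\mathbb C})$, is an Einstein--Hermitian reduction, and all of them induce the same (trivial) flat connection. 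So even after you know that $\widetilde\sigma$ carries Einstein--Hermitian reductions to Einstein--Hermitian reductions (this part of your argument, using $\sigma^*\omega\,=\,-\omega$ and $\sigma_G(K)\,=\,K$, is fine and is the same compatibility the paper exploits), you may conclude only that $\widetilde\sigma(E_K)$ is \emph{some} Einstein--Hermitian reduction, possibly different from $E_K$. In that case the element $k$ defined by $\widetilde\sigma(T_\gamma\widetilde x)\,=\,\widetilde x\cdot k$ lies a priori in $G$ rather than in $K$, and your homomorphism into $\widehat K$ does not yet exist.

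What is actually needed is the existence of an Einstein--Hermitian reduction that is invariant under $\rho$. This is exactly what the paper invokes, citing \cite[Proposition 3.7]{BGH}: it is a genuine fixed-point statement --- the set of Einstein--Hermitian reductions of a polystable $E_G$ is a homogeneous space under ${\rm Aut}(E_G)$, which is positive-dimensional when $E_G$ is polystable but not stable, and one must produce a point of it fixed by the involution induced by $\rho$ (for instance via nonpositive curvature of the space of metrics, or a direct construction) --- and not a consequence of uniqueness. Once an invariant reduction is granted, the rest of your outline --- the bookkeeping matching the $\sigma$--twisted product on $\Gamma$ with the semidirect product structure of $\widehat K$, the identification of the condition $c\,=\,e$ with $\widetilde\sigma\circ\widetilde\sigma\,=\,{\rm Id}$, well-definedness up to conjugation by $K$, and the inverse construction from $\varphi$ --- is essentially the paper's proof; in the converse direction you should, as the paper does, also verify that the map built from $\varphi(\gamma)$ is independent of the chosen class $\gamma\,\in\,\Gamma_1$ and of the path used to propagate it to the other fibers.
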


\begin{proof}
Let $(E_G\, ,\rho)$ be a polystable real principal $G$--bundle
on $X$. So $E_G$ is polystable by Proposition \ref{prop2}.
Therefore, $E_G$ has a flat connection given by a
$C^\infty$ reduction of structure group
$$
E_K\, \subset\, E_G\, ,
$$
where $K$ is the subgroup in \eqref{hK}; see \cite[page 146,
Theorem 7.1]{Ra}, \cite[page 208, Theorem 0.1]{AB}. It is possible to
choose $E_K$ such that $\rho(E_K)\,=\, E_K$ \cite[[Proposition 3.7]{BGH};
we fix such a reduction $E_K$. Let $\nabla$ be the flat connection on $E_K$.

Fix a point
\begin{equation}\label{z0}
z_0\, \in\, (E_K)_x\, .
\end{equation}
Taking parallel translations
of $z_0$, for the flat connection $\nabla$, along loops in $X$
based at $x$ we get a homomorphism
\begin{equation}\label{vpp}
\varphi'\, :\, \pi_1(X,x) \, \longrightarrow\, K\, .
\end{equation}

For any $\gamma\, \in\, \Gamma_1$ (see \eqref{Ga}), consider the
parallel translation of the element $z_0$ in \eqref{z0} along
$\gamma$. Let $z'_0\, \in\, (E_K)_{\sigma (x)}$ be the element
obtained by this parallel transport. Using the identification of
the total space of $E_G$ with the total space of $\sigma^*
\overline{E}_G$ (see Remark \ref{rem0}), the element $z'_0$
gives an element $z''_0\, \in\,
(\sigma^*\overline{E}_G)_x$. Therefore,
$$
\rho^{-1}_x(z''_0)\, \in\, (E_G)_x\, ,
$$
where $\rho_x\,=\, \rho\vert_{(E_G)_x}$.

It can be shown that
$\rho^{-1}_x(z''_0)$ lies in the submanifold $(E_K)_x\, 
\subset\, (E_G)_x$. To prove this, first observe that
$\rho(E_K)\, \subset\, E_G$ is a reduction of structure group
of $E_G$
to $\sigma_G(K)$ giving a Einstein--Hermitian connection on $E_G$
(we have again identified $E_G$ with $\sigma^*\overline{E}_G$ using
Remark \ref{rem0}). Now, since $\rho_G(K)\,=\, K$ and $\rho(E_K)\,=\, E_K$, it follows
that $\rho$ preserves $E_K$. Hence $\rho^{-1}_x(z''_0)\,\in\,
(E_K)_x$.

Let
$$
\gamma'\, \in\, K
$$
be the unique element such that $z_0\sigma_G(\gamma')^{-1}\,=\, \rho^{-1}_x(z''_0)$.

Let
\begin{equation}\label{vppp}
\varphi''\, :\,\Gamma_1 \, \longrightarrow\, K
\end{equation}
be the map defined by $\gamma\, \longmapsto\, \gamma'$, where
$\gamma'$ is constructed above from $\gamma$. Let
\begin{equation}\label{vp}
\varphi\, :\,\Gamma \, \longrightarrow\, \widehat{K}
\end{equation}
be the map defined as follows: $\varphi\vert_{\pi_1(X,x)}\,=\,
\varphi'$, where $\varphi'$ is constructed in \eqref{vpp}, and
$$
\varphi(\gamma)\,=\, (\varphi''(\gamma)\, ,1)\, \in\,
K\times ({\mathbb Z}/2{\mathbb Z})\,=\, \widehat{K}\, ,
$$
where $\varphi''$ is constructed in \eqref{vppp}, and
$1\, \in\, {\mathbb Z}/2{\mathbb Z}$ is the nontrivial
element (recall that the natural identification of $\widehat{K}$
with $K\times ({\mathbb Z}/2{\mathbb Z})$ is only set-theoretic).
It is easy to see that
$$
\varphi\, \in\, {\rm Hom}'(\Gamma\, , \widehat{K})\, .
$$

Conversely, given any
\begin{equation}\label{vp-1}
\varphi\, \in\, {\rm Hom}'(\Gamma\, , 
\widehat{K})\, ,
\end{equation}
we will describe a construction of a polystable real principal 
$G$--bundle on $X$.

Consider the restriction of $\varphi$ to $\pi_1(X,x)$. It
produces a flat principal $K$--bundle $E_K$ on $X$
together with a $K$--equivariant isomorphism
$$
\beta\, :\, K\, \longrightarrow\, (E_K)_x
$$
for the right--translation action of $K$ on itself. Define
\begin{equation}\label{z-1}
z_0\, :=\, \beta(e)\, \in\, (E_K)_x\, ,
\end{equation}
where $e\, \in\, K$ is the identity element. The flat
connection on $E_K$ will be denoted by $\nabla$.

Let $E_G\, :=\, E_K\times^K G$ be the principal $G$--bundle
obtained by extending the structure group of $E_K$ using the
inclusion map of $K$ in $G$. The connection $\nabla$ defines
a holomorphic structure on $E_G$. This holomorphic principal
$G$--bundle is polystable because $\nabla$ is flat.

We will construct an isomorphism of $(E_G)_x$ with
$(E_G)_{\sigma(x)}$. Take any element
$$
z_0g \, \in\, (E_G)_x\, ,
$$
where $g\, \in\, G$, and $z_0$ is the element in \eqref{z-1}.
Take an element $\gamma\, \in\, \Gamma_1$ (see \eqref{Ga}). Let
$$
T_\gamma\, :\, (E_G)_x\, \longrightarrow\, (E_G)_{\sigma(x)}
$$
be the parallel translation along $\gamma$. Let
\begin{equation}\label{rpx}
\rho'_{x,\gamma}\, :\, (E_G)_x\, \longrightarrow\, (E_G)_{\sigma(x)}
\end{equation}
be the map defined by $z_0g\, \longmapsto\, T_\gamma(z_0)
\varphi(\gamma) \sigma_G(g)$, where $\varphi$ is the
homomorphism in \eqref{vp-1}; here $\varphi(\gamma)$ is considered
as an element of $K$ using the natural identification of $\widehat{K}
\setminus K$ with $K$.

We will show that $\rho'_{x,\gamma}$ in \eqref{rpx} is independent
of the choice of $\gamma$. To prove this, take $\delta\, :=\,\gamma
\circ b\,=\, b\gamma$, where $b\, \in\, \pi_1(X,x)$. If $T_\delta\,:\,
(E_G)_x\, \longrightarrow\, (E_G)_{\sigma(x)}$ is the parallel
translation, with respect to $\nabla$, along $\delta$, then
$$
T_\delta (z_0)\varphi(\delta)\,=\,
(T_\gamma (z_0)\varphi(b^{-1}))(\varphi(b)\varphi(\gamma))
\,=\, T_\gamma(z_0) \varphi(\gamma)\, .
$$
This implies that $\rho'_{x,\gamma}\,=\, \rho'_{x,\delta}$.

Note that $\rho'_{x,\gamma}$ takes the natural action of $G$ on
$(E_G)_x$ to the action of $G$ on $(E_G)_{\sigma(x)}$ obtained by
twisting, using $\sigma_G$, the natural action of $G$ on 
$(E_G)_{\sigma(x)}$.

Now take any point $y\, \in\, X$. We will construct an 
isomorphism of $(E_G)_y$ with $(E_G)_{\sigma(y)}$.

Take any smooth path $\delta$ from $x$ to $y$. So
$\sigma(\delta)$ is a path from $\sigma(x)$ to
$\sigma(y)$. Let
$$
T_\delta\, :\, (E_G)_x\, \longrightarrow\, (E_G)_{y}
$$
and
$$
T_{\sigma(\delta)}\, :\, (E_G)_{\sigma(x)}\, \longrightarrow\, 
(E_G)_{\sigma(y)}
$$
be the parallel translations, with respect to the connection
$\nabla$, along $\delta$ and $\sigma(\delta)$ respectively.
Let
$$
\rho'_{y,\delta}\, :\, (E_G)_y\, \longrightarrow\, (E_G)_{\sigma(y)}
$$
be the map defined by $z\, \longmapsto\, T_{\sigma(\delta)}\circ
\rho'_{x,\gamma}\circ (T_\delta)^{-1}(z)$, where $\rho'_{x,\gamma}$ is 
constructed in \eqref{rpx} (we have shown that $\rho'_{x,\gamma}$ is 
independent of $\gamma$).

The above map $\rho'_{y,\delta}$ is again independent of the
choice of the path $\delta$. To prove this, take
$\eta\, :=\, \delta\circ b$, where $b\, \in\, \pi_1(X,x)$. Now
$$
T_{\sigma(\eta)}\circ\rho'_{x,\gamma}\circ (T_\eta)^{-1}
\,=\,T_{\sigma(\delta)} (T_{\sigma(\delta)})^{-1}\circ
T_{\sigma(\eta)}\circ T_\gamma \varphi(\gamma)\circ 
\varphi(b)(T_\delta)^{-1}
$$
$$
=\, T_{\sigma(\delta)}\circ
T_{\sigma(\delta^{-1}\circ\eta)\circ\gamma}\circ \varphi(\gamma b)
\circ (T_\delta)^{-1}\,=\, T_{\sigma(\delta)}\circ T_{\sigma(b)\circ
\gamma}\circ\varphi(\gamma b)\circ (T_\delta)^{-1}\, .
$$
Therefore, to prove that $\rho'_{y,\delta}\,=\, \rho'_{y,\eta}$,
it suffices to show that
\begin{equation}\label{r1}
T_{\sigma(b)\circ\gamma}\varphi(\gamma b)
\,=\, \rho'_{x,\gamma}\, .
\end{equation}
But $\gamma b$ is, by definition, $\sigma(b)\circ\gamma$. Therefore,
$$
T_{\sigma(b)\circ\gamma}\varphi(\gamma b)
\,=\,\rho'_{x,\sigma(b)\circ \gamma}\, ,
$$
where $\rho'_{x,\sigma(b)\circ \gamma}$ is constructed as
in \eqref{rpx}. But we have seen that $\rho'_{x,\gamma}$ is
independent of $\gamma$. Therefore, \eqref{r1} holds. Hence
$\rho'_{y,\delta}$ is independent of the choice of $\delta$.

Using the identification of the total spaces of $E_G$
and $\sigma^*\overline{E}_G$ (see Remark \ref{rem0}), the
above maps $\rho'_y\,
:=\, \rho'_{y,\delta}$, $y\, \in\, X$, together define a map
$$
\rho\, :\, E_G\, \longrightarrow\, \sigma^*\overline{E}_G\, .
$$
It can be checked that the pair $(E_G\, ,\rho)$ is a real
principal $G$--bundle. In view of Proposition \ref{prop2},
the real principal $G$--bundle $(E_G\, ,\rho)$ is polystable
because $E_G$ is polystable.
\end{proof}

It should be clarified that Theorem \ref{thm1} is not true
if the assumption that $G$ is semisimple is removed.

\section{Moduli space of principal bundles}

In this section it is assumed that $G$ is reductive. We also 
assume that $\text{genus}(X)\, \geq\, 3$.

Topological isomorphism classes of principal $G$--bundles
over $X$ are parametrized by the fundamental group $\pi_1(G)$;
any principal $G$--bundle is topologically trivial both on $X
\setminus\{x\}$ and on a neighborhood $D$ of $x$, and the resulting
map $D\setminus \{x\}\, \longrightarrow\, G$ produces an
element of $\pi_1(G)$. Fix a topological isomorphism class
$$
\lambda\, \in\, \pi_1(G)\, .
$$

Let ${\mathcal M}_X(G)$ be the moduli space of stable
principal $G$--bundles on $X$ of the given topological
type $\lambda$; see \cite{Ra2} for the
construction of ${\mathcal M}_X(G)$. This moduli space is
a normal quasiprojective complex variety; its dimension
is $\dim G(\text{genus}(X)-1)+ \dim Z$, where $Z$, as before,
is the center of $G$.

Let $E_G$ be a holomorphic principal $G$--bundle on $X$, and
let $Q$ be a parabolic subgroup of $G$. There is a natural
bijective correspondence between the holomorphic reductions
of structure group of $E_G$ to $Q$ and the holomorphic reductions
of structure group of $\sigma^*\overline{E}_G$ to the parabolic
subgroup $\sigma_G(Q)$. The identification between the total spaces
of $E_G$ and $\sigma^*\overline{E}_G$ (see Remark
\ref{rem0}) takes the total space of
a reduction of structure group of $E_G$ to $Q$ to the
total space of the corresponding reduction
of structure group of $\sigma^*\overline{E}_G$ to $\sigma_G(Q)$.
Using this correspondence between reductions
of structure group it follows immediately that
$E_G$ is stable if and only if $\sigma^*\overline{E}_G$ is stable.

We fix the topological isomorphism class $\lambda$ such that
$\sigma^*\overline{E}_G$ is topologically isomorphic to
$E_G$ for $E_G\, \in\, {\mathcal M}_X(G)$.
Let
\begin{equation}\label{mi}
\eta\, :\, {\mathcal M}_X(G)\, \longrightarrow\,
{\mathcal M}_X(G)
\end{equation}
be the anti-holomorphic involution defined by $E_G\, \longmapsto
\,\sigma^*\overline{E}_G$.

A holomorphic principal $G$--bundle $E_G$ is said to \textit{admit
a pseudo-real structure} if there is a holomorphic isomorphism of
principal $G$--bundles
$$
\rho\, :\, E_G\, \longrightarrow\,\sigma^*\overline{E}_G
$$
such that the pair $(E_G\, ,\rho)$ is pseudo-real.

Let
$$
{\mathcal M}^s_X(G)\, \subset\, {\mathcal M}_X(G)
$$
be the smooth locus of the variety. The involution $\eta$ in
\eqref{mi} preserves ${\mathcal M}^s_X(G)$.

\begin{theorem}\label{thm2}
Take any principal $G$--bundle $E_G\, \in\, {\mathcal M}^s_X
(G)$. Then this $E_G$ is fixed by the involution $\eta$ if and
only if $E_G$ admits a pseudo-real structure.
\end{theorem}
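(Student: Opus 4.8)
The plan is to prove the two implications separately, with essentially all of the work in the ``only if'' direction. The ``if'' direction is immediate: if $E_G$ admits a pseudo-real structure $\rho$, then $\rho$ is in particular a holomorphic isomorphism $E_G\,\longrightarrow\,\sigma^*\overline{E}_G$, so $E_G$ and $\sigma^*\overline{E}_G\,=\,\eta(E_G)$ define the same point of ${\mathcal M}^s_X(G)$; here one uses that $E_G$ and $\sigma^*\overline{E}_G$ are both stable of the same topological type, as recorded above. Hence $E_G$ is fixed by $\eta$.

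For the converse the decisive input is a description of the smooth locus. I would first establish that a stable bundle $E_G$ lies in ${\mathcal M}^s_X(G)$ if and only if ${\rm Aut}(E_G)\,=\, Z$, i.e. $E_G$ is regularly stable. This is the step that uses ${\rm genus}(X)\,\geq\, 3$: since the deformation theory of $E_G$ on the curve $X$ is unobstructed, the local model of the coarse moduli space at $E_G$ is $H^1(X,\,{\rm ad}(E_G))$ modulo the finite group ${\rm Aut}(E_G)/Z$ (the central automorphisms acting trivially on ${\rm ad}(E_G)$), and the genus bound is what forces this quotient to be smooth only when ${\rm Aut}(E_G)/Z$ is trivial. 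I regard this identification of the smooth locus with the regularly stable locus as the technical heart of the proof; everything after it is formal.

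Granting ${\rm Aut}(E_G)\,=\, Z$, the pseudo-real structure is produced as follows. If $E_G$ is fixed by $\eta$, then $E_G$ and $\sigma^*\overline{E}_G$ are isomorphic as holomorphic principal $G$--bundles, so I choose any holomorphic isomorphism $\rho\,:\, E_G\,\longrightarrow\,\sigma^*\overline{E}_G$ and set $\tau\,:=\,(\sigma^*\overline{\rho})\circ\rho$, the composition that appears in Definition \ref{def1}. This $\tau$ is a holomorphic automorphism of $E_G$, hence by regular stability it is given by a central element $c_0\,\in\, Z$. Applying the involutive operation $\psi\,\mapsto\,\sigma^*\overline{\psi}$ to $\tau$ and using $\sigma^*\overline{\sigma^*\overline{\rho}}\,=\,\rho$, one obtains the identity $\sigma^*\overline{\tau}\,=\,\rho\,\tau\,\rho^{-1}$ of automorphisms of $\sigma^*\overline{E}_G$. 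Because $\rho$ is $G$--equivariant and $c_0$ is central, the right-hand side is again the automorphism given by $c_0$, whereas by Remark \ref{rem1} the left-hand side is the automorphism given by $\sigma_G(c_0)$. Therefore $\sigma_G(c_0)\,=\, c_0$, so $c_0\,\in\, Z\bigcap G_{\mathbb R}\,=\, Z_{\mathbb R}$, and $(E_G\,,\rho)$ is a pseudo-real principal $G$--bundle with $c\,=\, c_0$.

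The main obstacle is thus the first step of the converse, namely proving that the smooth points of ${\mathcal M}_X(G)$ are precisely the regularly stable bundles and isolating the role of the genus hypothesis there. Once ${\rm Aut}(E_G)\,=\, Z$ is in hand, no further choices or normalizations are needed: \emph{any} isomorphism $\rho$ automatically satisfies the defining condition of Definition \ref{def1}, with the required central element being the $c_0$ extracted above.
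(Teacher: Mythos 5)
Your proposal is correct and takes essentially the same route as the paper's proof: both reduce the ``only if'' direction to the fact that ${\mathcal M}^s_X(G)$ is exactly the locus of regularly stable bundles, and then extract from an arbitrary isomorphism $\rho$ a central element $c_0$ fixed by $\sigma_G$ --- your identity $\sigma^*\overline{\tau}\,=\,\rho\,\tau\,\rho^{-1}$ is the same computation as the paper's observation that $\theta$ commutes with $(\sigma^*\overline{\theta})\circ\theta$. The only divergence is bookkeeping: the paper simply cites [BH, Corollary 3.4] for the identification of the smooth locus with the regularly stable locus (which is where ${\rm genus}(X)\,\geq\,3$ enters), whereas you propose to prove it via a deformation-theoretic sketch; that sketch alone is not a complete proof (a finite linear quotient of a smooth germ can be smooth, so the genus bound does real work there), but since it is precisely the result the paper imports, this does not affect the soundness of your argument.
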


\begin{proof}
Let $(E_G\, ,\rho)$ be a pseudo-real principal $G$--bundle.
Then $\rho$ is an isomorphism of $E_G$ with
$\sigma^*\overline{E}_G$. Therefore, if $E_G\, \in\, {\mathcal 
M}^s_X(G)$, then $\eta(\{E_G\})\,=\, \{E_G\}$. We will now prove the
converse.

For a holomorphic principal $G$--bundle $F_G$, we have
$Z\, \subset\, \text{Aut}(F_G)$ (see
\eqref{cZ}). A stable principal $G$--bundle $F_G$
on $X$ is called \textit{regularly stable} if
$Z\, =\, \text{Aut}(F_G)$. Note that any stable principal
$\text{SL}(n, {\mathbb C})$--bundle is regularly stable.

The smooth locus ${\mathcal M}^s_X(G)$ coincides with the
locus in ${\mathcal M}_X(G)$ of regularly stable principal
$G$--bundles \cite[Corollary 3.4]{BH}; the assumption that
$g\, \geq\, 3$ is needed here (note that the moduli space
of principal $\text{SL}(2,{\mathbb C})$--bundles on a curve
of genus two is smooth as it is isomorphic to ${\mathbb C}
{\mathbb P}^3$).

Let $E_G\, \in\, {\mathcal M}_X(G)$ be a regularly stable
principal $G$--bundle such that $\eta(\{E_G\})\,=\, \{E_G\}$.
Fix a holomorphic isomorphism
$$
\theta\, :\, E_G\, \longrightarrow\, \sigma^*\overline{E}_G\, .
$$
Consider
$$
\sigma^*\overline{\theta}\, :\, \sigma^*\overline{E}_G
\, \longrightarrow\,
\sigma^*\overline{\sigma^*\overline{E}}_G\,=\,
\sigma^*\sigma^*\overline{\overline{E}}_G\, =\, E_G
$$
as in Definition \ref{def1}. The composition $(\sigma^*
\overline{\theta})\circ \theta$ is a holomorphic automorphism
of $E_G$. Since $E_G$ is regularly stable, there is an element
\begin{equation}\label{z00}
z_0\, \in\, Z
\end{equation}
such that $(\sigma^*\overline{\theta})\circ \theta$
coincides with the action of $z_0$ on $E_G$.

It is straightforward to check that $\theta$ commutes with the
composition $(\sigma^*\overline{\theta})\circ\theta$.
Indeed, in
terms of the identification of the total space of $E_G$ with
that of $\sigma^*\overline{E}_G$ (see Remark \ref{rem0}),
the map $\sigma^*\overline{\theta}$
coincides with $\theta$; this immediately 
implies that $(\sigma^*\overline{\theta})
\circ \theta$ commutes with $\theta$. Since $\theta$ commutes
with $(\sigma^*\overline{\theta})\circ\theta$, and
$\theta$ is an isomorphism
between $E_G$ and $\sigma^*\overline{E}_G$, from the construction
of $\overline{E}_G$ it follows immediately that
$$
\sigma_G(z_0)\, =\, z_0\, ,
$$
where $z_0$ is the element in \eqref{z00} (recall that
$\overline{E}_G$ is obtained from $E_G$ by twisting the action
of $G$ on $E_G$ by $\sigma_G$). Therefore,
$(E_G\, ,\theta)$ is pseudo-real. This completes the proof.
\end{proof}

\begin{remark}
{\rm The proof of Theorem \ref{thm2} shows that a regularly
stable principal $G$--bundle is fixed by the involution $\eta$ if
and only if $E_G$ admits a pseudo-real structure. Note that
this statement is valid even even if the genus of $X$ is two.}
\end{remark}

\section*{Acknowledgements}

We are very grateful to the referee for detailed comments.
This work was carried out while both the authors were visiting
the Issac Newton Institute. We thank Issac Newton Institute
for its hospitality.


\end{document}